\documentclass[12pt,letterpaper]{article}
\usepackage[utf8]{inputenc}
\setcounter{tocdepth}{2}
\usepackage{color}
\usepackage{amsmath}
\usepackage{amsthm}
\usepackage{amssymb}
\PassOptionsToPackage{normalem}{ulem}
\usepackage{ulem}
\usepackage[unicode=true,pdfusetitle,
 bookmarks=true,bookmarksnumbered=false,bookmarksopen=false,
 breaklinks=false,pdfborder={0 0 1},backref=page,colorlinks=true]
 {hyperref}
\hypersetup{
 linkcolor=blue, citecolor=blue}

\makeatletter

\pdfpageheight\paperheight
\pdfpagewidth\paperwidth

\numberwithin{equation}{section}
\numberwithin{figure}{section}
\theoremstyle{plain}
\newtheorem{thm}{\protect\theoremname}[section]
\theoremstyle{definition}
\newtheorem{defn}[thm]{\protect\definitionname}
\theoremstyle{remark}
\newtheorem{rem}[thm]{\protect\remarkname}
\theoremstyle{plain}
\newtheorem{lem}[thm]{\protect\lemmaname}
\theoremstyle{plain}
\newtheorem{cor}[thm]{\protect\corollaryname}
\theoremstyle{plain}
\newtheorem{prop}[thm]{\protect\propositionname}
\theoremstyle{plain}
\newtheorem{question}[thm]{\protect\questionname}

\@ifundefined{date}{}{\date{}}
\usepackage{graphicx}
\usepackage{appendix}

\usepackage{enumitem}
\usepackage[backref=page]{hyperref}
\usepackage{mathabx}

\makeatother

\providecommand{\corollaryname}{Corollary}
\providecommand{\definitionname}{Definition}
\providecommand{\lemmaname}{Lemma}
\providecommand{\propositionname}{Proposition}
\providecommand{\questionname}{Question}
\providecommand{\remarkname}{Remark}
\providecommand{\theoremname}{Theorem}

\begin{document}
\global\long\def\F{\mathcal{F} }%
\global\long\def\Aut{\mathrm{Aut}}%
\global\long\def\C{\mathbb{C}}%
\global\long\def\H{\mathcal{H}}%
\global\long\def\U{\mathcal{U}}%
\global\long\def\P{\mathcal{P}}%
\global\long\def\ext{\mathrm{ext}}%
\global\long\def\hull{\mathrm{hull}}%
\global\long\def\triv{\mathrm{triv}}%
\global\long\def\Hom{\mathrm{Hom}}%

\global\long\def\trace{\mathrm{tr}}%
\global\long\def\End{\mathrm{End}}%

\global\long\def\L{\mathcal{L}}%
\global\long\def\W{\mathcal{W}}%
\global\long\def\E{\mathbb{E}}%
\global\long\def\SL{\mathrm{SL}}%
\global\long\def\R{\mathbb{R}}%
\global\long\def\Z{\mathbf{Z}}%
\global\long\def\rs{\to}%
\global\long\def\A{\mathcal{A}}%
\global\long\def\a{\mathbf{a}}%
\global\long\def\rsa{\rightsquigarrow}%
\global\long\def\D{\mathbf{D}}%
\global\long\def\b{\mathbf{b}}%
\global\long\def\df{\mathrm{def}}%
\global\long\def\eqdf{\stackrel{\df}{=}}%
\global\long\def\ZZ{\mathcal{Z}}%
\global\long\def\Tr{\mathrm{Tr}}%
\global\long\def\N{\mathbb{N}}%
\global\long\def\std{\mathrm{std}}%
\global\long\def\HS{\mathrm{H.S.}}%
\global\long\def\e{\varepsilon}%
\global\long\def\c{\mathbf{c}}%
\global\long\def\d{\mathbf{d}}%
\global\long\def\AA{\mathbf{A}}%
\global\long\def\BB{\mathbf{B}}%
\global\long\def\u{\mathbf{u}}%
\global\long\def\v{\mathbf{v}}%
\global\long\def\spec{\mathrm{spec}}%
\global\long\def\Ind{\mathrm{Ind}}%
\global\long\def\half{\frac{1}{2}}%
\global\long\def\Re{\mathrm{Re}}%
\global\long\def\Im{\mathrm{Im}}%
\global\long\def\p{\mathfrak{p}}%
\global\long\def\j{\mathbf{j}}%
\global\long\def\uB{\underline{B}}%
\global\long\def\tr{\mathrm{tr}}%
\global\long\def\rank{\mathrm{rank}}%
\global\long\def\hh{\mathcal{H}}%
\global\long\def\h{\mathfrak{h}}%

\global\long\def\EE{\mathcal{E}}%
\global\long\def\PSL{\mathrm{PSL}}%
\global\long\def\G{\mathcal{G}}%
\global\long\def\Int{\mathrm{Int}}%
\global\long\def\acc{\mathrm{acc}}%
\global\long\def\awl{\mathsf{awl}}%
\global\long\def\even{\mathrm{even}}%
\global\long\def\z{\mathbf{z}}%
\global\long\def\id{\mathrm{id}}%
\global\long\def\CC{\mathcal{C}}%
\global\long\def\cusp{\mathrm{cusp}}%
\global\long\def\new{\mathrm{new}}%

\global\long\def\LL{\mathbb{L}}%
\global\long\def\M{\mathbf{M}}%
\global\long\def\I{\mathcal{I}}%
\global\long\def\X{X}%
\global\long\def\free{\mathbf{F}}%
\global\long\def\into{\hookrightarrow}%
\global\long\def\Ext{\mathrm{Ext}}%
\global\long\def\B{\mathcal{B}}%
\global\long\def\Id{\mathrm{Id}}%
\global\long\def\Q{\mathbb{Q}}%

\global\long\def\O{\mathcal{T}}%
\global\long\def\Mat{\mathrm{Mat}}%
\global\long\def\NN{\mathrm{NN}}%
\global\long\def\nn{\mathfrak{nn}}%
\global\long\def\Tr{\mathrm{Tr}}%
\global\long\def\SGRM{\mathsf{SGRM}}%
\global\long\def\m{\mathbf{m}}%
\global\long\def\n{\mathbf{n}}%
\global\long\def\k{\mathbf{k}}%
\global\long\def\GRM{\mathsf{GRM}}%
\global\long\def\vac{\mathrm{vac}}%
\global\long\def\SS{\mathcal{S}}%
\global\long\def\red{\mathrm{red}}%
\global\long\def\V{V}%
\global\long\def\SO{\mathrm{SO}}%
\global\long\def\Gd{\Gamma^{\vee}}%
\global\long\def\fd{\mathrm{fd}}%
\global\long\def\perm{\mathrm{perm}}%
\global\long\def\tos{\xrightarrow{\mathrm{strong}}}%
\global\long\def\Mat{\mathbf{Mat}}%
\global\long\def\Perm{\mathbf{Perm}}%
\global\long\def\PMF{\mathrm{P}\Mat\mathrm{F}}%
\global\long\def\PPF{\mathrm{P}\Perm\mathrm{F}}%
\global\long\def\KK{\mathcal{K}}%

\vspace{-3in} 
\title{Strong convergence of \\
unitary and permutation\\
 representations of discrete groups}
\author{Michael Magee}
\maketitle
\begin{abstract}
We survey a research program on the strong convergence of unitary
and permutation representations of discrete groups. We also take the
opportunity to flesh out details that have not appeared elsewhere. 

\tableofcontents{}
\end{abstract}

\section{Definitions}

Throughout the article $\Gamma$ denotes an infinite discrete group.
For $n\in\N$ we write $\U(n)$ for the group of complex $n\times n$
unitary matrices and $S_{n}$ for the group of permutations of $[n]\eqdf\{1,\ldots,n\}$.
\begin{defn}[Strong convergence, $\PMF$]
If $\{\rho_{i}:\Gamma\to\U(n_{i})\}_{i=1}^{\infty}$ are a sequence
of (possibly random) finite dimensional unitary representations of
$\Gamma$, say $\rho_{i}$ strongly converge to the regular representation
(almost surely, or in probability, if $\rho_{i}$ are random) if for
any $z\in\C[\Gamma]$,
\[
\lim_{i\to\infty}\|\rho_{i}(z)\|=\|\lambda_{\Gamma}(z)\|
\]
(a.s., or in probability, respectively) where $\lambda_{\Gamma}:\Gamma\to U(\ell^{2}(\Gamma))$
is the left regular representation. The norms above are operator norms.
We write $\rho_{i}\tos\lambda_{\Gamma}$ in this event. If $\Gamma$
has such a sequence of unitary representations then we say $\Gamma$
is purely matricial field ($\PMF$).
\end{defn}

\begin{rem}
~
\begin{enumerate}
\item \uline{Weak convergence.} Some authors ask for weak convergence as
part of the definition of strong convergence, but we do not. Weak
convergence is the statement that for all $z\in\C[\Gamma]$, 
\[
\lim_{i\to\infty}\tr[\rho_{i}(z)]=\tau(z)
\]
where $\tr=n_{i}^{-1}\Tr$ is the normalized trace on $n_{i}\times n_{i}$
matrices and $\tau(z)$ is the canonical tracial state on the reduced
$C^{*}$-algebra $C_{\red}^{*}(\Gamma)$. If $C_{\red}^{*}(\Gamma)$
has a unique tracial state then strong convergence implies weak convergence
(e.g. \cite[Lemma 6.1]{louder2023strongly})\footnote{We first heard this observation from Beno\^{i}t Collins in June 2022.}.
We now know exactly when $C_{\red}^{*}(\Gamma)$ has a unique tracial
state by Breuillard-Kalantar-Kennedy-Ozawa \cite[Thm. 1.6]{BKKO}:
this is so when $\Gamma$ has no non-trivial normal amenable subgroup.
\item \uline{Nomenclature.} Let~$M_{n_{i}}$denote the complex matrix $C^{*}$-algebra
of dimension $n_{i}$, $\ell^{\infty}(\prod_{i}M_{n_{i}})$ denote
the bounded sequences in the product, and $\I$ the closed two sided
ideal of sequences that converge to zero. If $\rho_{i}\tos\lambda_{\Gamma}$
then $\prod_{i}\rho_{i}$ descends (and extends) to an embedding
\[
C_{\red}^{*}(\Gamma)\hookrightarrow\ell^{\infty}(\prod_{i}M_{n_{i}})/\I.
\]
If there exists any such embedding then $C_{\red}^{*}(\Gamma)$ is
called matricial field by Blackadar and Kirchberg in \cite{BlackadarKirchberg2}.
But conversely such an embedding does not (a priori) have to factor
through $\ell^{\infty}(\prod_{i}M_{n_{i}})$ when restricted to $\C[\Gamma]$.
Hence the adjective `purely'. The concept of purely matricial field
was introduced in Magee--de la Salle \cite{MageeSalle}.
\item More generally, if $G$ is a locally compact topological group, we
can extend our definition by replacing $\C[\Gamma]$ by the continuous
compactly supported complex functions on $G$.
\end{enumerate}
\end{rem}

A first attempt to require strong convergence to factor through permutations
would be that the $\text{\ensuremath{\rho_{i}}}$ are the composition
of some $\phi_{i}\in\Hom(\Gamma,S_{n})$ with the ($n$-dimensional)
permutation representations of $S_{n}$. However, since this gives
rise to $\rho_{i}$ with non-zero invariant vectors, this can never
work if $\Gamma$ is non-amenable, which is not satisfactory. So the
definition is modified in the following way. Let $\std$ denote the
$(n-1)$-dimensional irreducible subrepresentation of the defining
representation of $S_{n}$. 
\begin{defn}[$\PPF$]
If there exist a sequence of homomorphisms $\{\phi_{i}:\Gamma\to S_{n_{i}}\}_{i=1}^{\infty}$
such that 
\[
\{\rho_{i}\eqdf\std\circ\phi_{i}\}_{i=1}^{\infty}
\]
strongly converge to the regular representation, then we say $\Gamma$
is purely permutation field ($\PPF$).
\end{defn}

Strong convergence, and the associated above properties $\PMF$ and
$\PPF$ have recently found powerful applications in a surprising
range of settings including but not limited to the spectral geometry
of graphs \cite{BordenaveCollins} and hyperbolic manifolds \cite{HideMagee,louder2023strongly,MageeThomas},
the theory of minimal surfaces (observed by Song \cite{Song2024,Song2024a},
and the Peterson-Thom conjecture \cite{Peterson2011} (as observed
by Hayes \cite{Hayes2022}) on subalgebras of free group factors.

\subsection*{Acknowledgments}

This paper is a survey of some of my results with collaborators together
with things I thought about during a stay at I.A.S. during 2023-2024.
I am extremely grateful to the I.A.S. for this opportunity. Some particular
elements of this year have had a large effect on this document. The
conversations I had with Mikael de la Salle who patiently explained
many things to me. The seminar at Princeton University organized by
Peter Sarnak on `$C^{*}$-algebras and related topics' was extremely
stimulating and sparked many conversations. Indeed, I also learned
many interesting things from subsequent conversations with Ramon van
Handel and his lectures in the seminar.

\uline{Funding}: This material is based upon work supported by the
National Science Foundation under Grant No. DMS-1926686. This project
has received funding from the European Research Council (ERC) under
the European Union’s Horizon 2020 research and innovation programme
(grant agreement No 949143).

\section{$\protect\PMF$ and $\protect\PPF$}

We begin with some basic properties of $\PMF$ and $\PPF$. In this
section $\Lambda\leq\Gamma$ are discrete groups.
\begin{lem}[Restriction to subgroups]
I\label{lem:restriction}f $\Gamma$ is $\PMF$ (resp. $\PPF$) then
$\Lambda$ is $\PMF$ (resp. $\PPF$).
\end{lem}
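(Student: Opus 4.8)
The plan is to take the obvious restriction of representations and check that strong convergence is inherited; the only real input needed is the classical fact that the reduced $C^{*}$-norm does not change under restriction to a subgroup, i.e.\ $C_{\red}^{*}(\Lambda)\hookrightarrow C_{\red}^{*}(\Gamma)$ is isometric on $\C[\Lambda]$.

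First I would do the $\PMF$ case. Suppose $\{\rho_{i}:\Gamma\to\U(n_{i})\}$ satisfies $\rho_{i}\tos\lambda_{\Gamma}$, and set $\psi_{i}\eqdf\rho_{i}|_{\Lambda}:\Lambda\to\U(n_{i})$. For $z\in\C[\Lambda]$, regarded inside $\C[\Gamma]$, one has $\psi_{i}(z)=\rho_{i}(z)$, so $\lim_{i}\|\psi_{i}(z)\|=\|\lambda_{\Gamma}(z)\|$ by hypothesis. It then remains to identify $\|\lambda_{\Gamma}(z)\|$ with $\|\lambda_{\Lambda}(z)\|$ for such $z$.

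The key (and only substantive) step is this identification. Choosing a set of representatives for the right cosets $\Lambda\backslash\Gamma$ gives a $\Lambda$-invariant orthogonal decomposition $\ell^{2}(\Gamma)=\bigoplus_{\Lambda g\in\Lambda\backslash\Gamma}\ell^{2}(\Lambda g)$ for the left regular action of $\Lambda$, and $\delta_{hg}\mapsto\delta_{h}$ is a $\Lambda$-equivariant unitary $\ell^{2}(\Lambda g)\cong\ell^{2}(\Lambda)$. Hence $\lambda_{\Gamma}|_{\Lambda}$ is unitarily equivalent to an amplification of $\lambda_{\Lambda}$ by the (possibly infinite) cardinality of $\Lambda\backslash\Gamma$, and amplification does not change operator norms. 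Combining with the previous paragraph, $\lim_{i}\|\psi_{i}(z)\|=\|\lambda_{\Lambda}(z)\|$ for every $z\in\C[\Lambda]$, that is $\psi_{i}\tos\lambda_{\Lambda}$, so $\Lambda$ is $\PMF$.

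For $\PPF$ the same argument applies once we note that restriction respects the permutation structure: if $\{\phi_{i}:\Gamma\to S_{n_{i}}\}$ witnesses that $\Gamma$ is $\PPF$, then $\phi_{i}|_{\Lambda}\in\Hom(\Lambda,S_{n_{i}})$ and $\std\circ(\phi_{i}|_{\Lambda})=(\std\circ\phi_{i})|_{\Lambda}$, so applying the $\PMF$ argument to $\rho_{i}\eqdf\std\circ\phi_{i}$ gives $\std\circ(\phi_{i}|_{\Lambda})\tos\lambda_{\Lambda}$. Finally, since the entire argument is carried out one element $z\in\C[\Lambda]$ at a time and never touches the underlying probability space, it goes through verbatim when the $\rho_{i}$ (resp.\ $\phi_{i}$) are random, for either almost-sure or in-probability convergence. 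There is essentially no obstacle beyond this bookkeeping; the one mild point to get right is that the coset decomposition is valid for infinite index, where ``amplification by infinite multiplicity'' is to be read as a direct sum over $\Lambda\backslash\Gamma$ of copies of $\lambda_{\Lambda}$.
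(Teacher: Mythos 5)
Your proposal is correct and follows essentially the same route as the paper: restrict the representations, and use the coset decomposition $\ell^{2}(\Gamma)\cong\bigoplus_{\Lambda\backslash\Gamma}\ell^{2}(\Lambda\gamma)$ with each summand $\Lambda$-equivariantly unitary to $\ell^{2}(\Lambda)$, so that $\|\lambda_{\Gamma}(z)\|=\|\lambda_{\Lambda}(z)\|$ for $z\in\C[\Lambda]$, with the same observation that restriction preserves the permutation structure for the $\PPF$ case. Your spelled-out identification $\delta_{hg}\mapsto\delta_{h}$ in fact fixes a small typo in the paper's displayed decomposition (which writes $\ell^{2}(\Gamma)$ where $\ell^{2}(\Lambda)$ is meant).
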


\begin{proof}
Suppose $z\in\C[\Lambda]\leq\C[\Gamma]$. If $\rho_{i}\tos\lambda_{\Gamma}$
then $\|\rho_{i}(z)\|\to\|\lambda_{\Gamma}(z)\|$ as $i\to\infty$.
But as a $\C[\Lambda]$-module,
\[
\ell^{2}(\Gamma)\cong\bigoplus_{[\gamma]\in\Lambda\backslash\Gamma}\ell^{2}(\Lambda\gamma)\cong\bigoplus_{[\gamma]\in\Lambda\backslash\Gamma}\ell^{2}(\Gamma)
\]
 so $\|\lambda_{\Gamma}(z)\|=\|\lambda_{\Lambda}(z)\|$. Hence if
$\rho'_{i}$ is the restriction of $\rho_{i}$ to $\Lambda$, $\rho'_{i}\tos\lambda_{\Lambda}$.
If the representations factor through $S_{n_{i}}$ then their restrictions
still do.
\end{proof}
\begin{lem}[Induction to finite index overgroups]
\label{lem:Induction1}If $\Lambda$ is $\PMF$ and finite index
in $\Gamma$ then $\Gamma$ is also $\PMF$.
\end{lem}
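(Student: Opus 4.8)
The plan is to manufacture witnessing representations of $\Gamma$ by \emph{inducing} those of $\Lambda$. Write $d\eqdf[\Gamma:\Lambda]<\infty$, and let $\rho_{i}\colon\Lambda\to\U(n_{i})$ be finite-dimensional unitary representations with $\rho_{i}\tos\lambda_{\Lambda}$, furnished by the hypothesis that $\Lambda$ is $\PMF$. I would set $\sigma_{i}\eqdf\Ind_{\Lambda}^{\Gamma}\rho_{i}$, a unitary representation of $\Gamma$ of dimension $dn_{i}$, and aim to show $\sigma_{i}\tos\lambda_{\Gamma}$. Since induction carries the regular representation of a subgroup to the regular representation of the group, $\Ind_{\Lambda}^{\Gamma}\lambda_{\Lambda}\cong\lambda_{\Gamma}$, so this is precisely the assertion that induction from a fixed finite-index subgroup preserves strong convergence, which yields the lemma.

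To make induction explicit I would fix a transversal $t_{1},\dots,t_{d}$ for $\Gamma/\Lambda$, so that $\C[\Gamma]=\bigoplus_{j=1}^{d}t_{j}\,\C[\Lambda]$ is free of rank $d$ as a right $\C[\Lambda]$-module. Left multiplication then defines a unital $*$-homomorphism $W\colon\C[\Gamma]\to M_{d}(\C[\Lambda])$, $z\mapsto(w_{kj}(z))_{k,j}$, determined by $z\,t_{j}=\sum_{k}t_{k}\,w_{kj}(z)$; concretely $W(z)$ is a sum of monomial matrices over $\C[\Lambda]$ coming from the permutation-with-twist action of $\Gamma$ on $\Gamma/\Lambda$. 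A routine coset computation identifies $\Ind_{\Lambda}^{\Gamma}\rho\cong(\id_{M_{d}}\otimes\rho)\circ W$ for every finite-dimensional unitary representation $\rho$ of $\Lambda$, and — taking $\rho=\lambda_{\Lambda}$ and using $\ell^{2}(\Gamma)\cong\C^{d}\otimes\ell^{2}(\Lambda)$ — $\lambda_{\Gamma}=(\id_{M_{d}}\otimes\lambda_{\Lambda})\circ W$. Thus for each $z\in\C[\Gamma]$ the operators $\sigma_{i}(z)$ and $\lambda_{\Gamma}(z)$ are obtained by applying $\rho_{i}$, respectively $\lambda_{\Lambda}$, entrywise to the \emph{same} matrix $W(z)\in M_{d}(\C[\Lambda])$, and the hypothesis $\rho_{i}\tos\lambda_{\Lambda}$ says exactly that every entry converges in norm.

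The step I expect to need the most care is that entrywise norm convergence of $d\times d$ matrices does not in itself force convergence of the matrix norm: the elementary bounds $\max_{k,l}\|x_{kl}\|\le\|(x_{kl})\|\le\sum_{k,l}\|x_{kl}\|$ in $M_{d}(A)$ only trap $\|\sigma_{i}(z)\|$ within a multiplicative factor $d^{2}$ of $\|\lambda_{\Gamma}(z)\|$. To close this gap I would use a spectral-radius (power) trick, exploiting that $\sigma_{i}$ is a $*$-homomorphism: with $P\eqdf z^{*}z$ one has $\|\sigma_{i}(z)\|^{2}=\|\sigma_{i}(P)\|=\|\sigma_{i}(P^{m})\|^{1/m}$ for every $m\ge1$, while $\sigma_{i}(P^{m})=(\id_{M_{d}}\otimes\rho_{i})(W(P)^{m})$ is again a matrix whose entries are fixed elements of $\C[\Lambda]$ on which $\rho_{i}$ converges, with analogous entries for $\lambda_{\Gamma}(P^{m})$; applying the crude bounds to $P^{m}$ and using $\|\lambda_{\Gamma}(P^{m})\|=\|\lambda_{\Gamma}(P)\|^{m}$ gives
\[
d^{-2/m}\,\|\lambda_{\Gamma}(z)\|^{2}\ \le\ \liminf_{i}\|\sigma_{i}(z)\|^{2}\ \le\ \limsup_{i}\|\sigma_{i}(z)\|^{2}\ \le\ d^{2/m}\,\|\lambda_{\Gamma}(z)\|^{2}.
\]
Letting $m\to\infty$ yields $\|\sigma_{i}(z)\|\to\|\lambda_{\Gamma}(z)\|$ for all $z\in\C[\Gamma]$, hence $\sigma_{i}\tos\lambda_{\Gamma}$ and $\Gamma$ is $\PMF$. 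I note that this uses only the norm convergence built into $\tos$; in particular no weak convergence of the $\rho_{i}$ is needed, which is reassuring given that our definition of strong convergence does not include it.
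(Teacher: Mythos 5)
Your proof is correct. The paper itself does not argue the lemma directly: it derives it as the special case of Theorem \ref{thm:induction} (induction of strong convergence from a cocompact lattice in a locally compact group, here $\Lambda\leq\Gamma$ discrete with finite covolume), whose proof runs through $C_{c}(G)$, integral kernels, Hilbert--Schmidt and compact operators, and Proposition \ref{prop:sc-with-matrix-coefs}, which in turn rests on the effective matrix amplification lemma. Your argument is the self-contained discrete specialization of exactly that strategy: you realize $\Ind_{\Lambda}^{\Gamma}\rho$ as $(\id_{M_{d}}\otimes\rho)\circ W$ for the $*$-homomorphism $W:\C[\Gamma]\to M_{d}(\C[\Lambda])$ coming from a transversal (with $\lambda_{\Gamma}\cong(\id\otimes\lambda_{\Lambda})\circ W$ and $\Ind_{\Lambda}^{\Gamma}\lambda_{\Lambda}\cong\lambda_{\Gamma}$), and then upgrade entrywise norm convergence to convergence of the matrix norm. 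You correctly identify that this last step is the only real issue, and your power trick --- comparing $\|\sigma_{i}((z^{*}z)^{m})\|^{1/m}$ with the crude entrywise bounds and letting $m\to\infty$ so the factor $d^{\pm2/m}$ disappears --- is precisely the content of the paper's ``effective matrix amplification'' lemma (which uses only the diagonal entries of $(x^{*}x)^{p}$ and a factor $n^{1/2p}$, a cosmetic improvement over your $d^{2/m}$), re-derived by hand rather than cited. What your route buys is elementarity: no locally compact groups, Haar measure, or compact-operator approximation are needed, and the proof stays entirely inside $\C[\Gamma]$ and $M_{d}(\C[\Lambda])$. What the paper's route buys is generality: the same machinery handles arbitrary cocompact lattices, which is what the applications to hyperbolic surfaces require. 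Two small points worth making explicit if you write this up: check that $W$ is indeed $*$-preserving (this uses $w_{kj}(z)=E_{\Lambda}(t_{k}^{-1}zt_{j})$ for the transversal, so that $w_{jk}(z^{*})=w_{kj}(z)^{*}$), and note that your lower bound needs the chain $\max_{k,l}\|\lambda_{\Lambda}(w_{kl}(P^{m}))\|\geq d^{-2}\|\lambda_{\Gamma}(P^{m})\|$ on the limit side before taking $m$ large --- both are routine and your sketch already points at them.
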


Lemma \ref{lem:Induction1} is a special case of a more general phenomenon
(induction from co-compact lattices) that will be covered later in
the paper (see Theorem \ref{thm:induction}).

\subsection{Amenable groups. }

The following argument about amenable groups was obtained in conversations
with Mikael de la Salle. 

Say that $\Gamma$ is residually linear (RL) (resp. residually finite
(RF)) if it embeds into a product of $\mathrm{GL}_{n_{i}}$ (resp.
$S_{n_{i}})$. A theorem of Malcev \cite{Malcev} states that every
finitely generated (f.g.) linear group is residually finite. Hence
if $\Gamma$ is f.g. and RL then it is RF. 

If $\Gamma$ is not RL then there is $\gamma\in\Gamma$ that is killed
by every homomorphism to some $\mathrm{GL}_{n}$. Then considering
\[
z=\gamma-\id\in\C[\Gamma],
\]
we have 
\[
\|\rho(z)\|=0
\]
 for every $\rho:\Gamma\to\mathrm{GL}_{n}$. But $\|\lambda(z)\|$
is not zero (or else $\lambda(z)=0$ but $\C[\Gamma]$ always embeds
to $C_{\red}^{*}(\Gamma)$). The upshot is that:
\begin{lem}
\label{lem:not-PMF}A $\PMF$ group is residually linear. Hence it
is also either residually finite or not finitely generated.
\end{lem}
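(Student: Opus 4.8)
The statement is a direct consequence of the discussion preceding it, which I would organize as follows. The plan is to prove residual linearity by contradiction, and then deduce the dichotomy from Malcev's theorem. First I would suppose, for contradiction, that $\Gamma$ is $\PMF$ but \emph{not} residually linear: fix a sequence $\rho_{i}:\Gamma\to\U(n_{i})$ with $\rho_{i}\tos\lambda_{\Gamma}$ (the random case being handled as below), and fix $\gamma\in\Gamma\setminus\{\id\}$ lying in the kernel of \emph{every} homomorphism $\Gamma\to\mathrm{GL}_{n}(\C)$, over all $n$. Put $z=\gamma-\id\in\C[\Gamma]$. Since each $\rho_{i}$ is a homomorphism into $\U(n_{i})\subseteq\mathrm{GL}_{n_{i}}(\C)$ (for every realization, in the random case), we get $\rho_{i}(\gamma)=\Id$, hence $\rho_{i}(z)=0$ and $\|\rho_{i}(z)\|=0$ identically in $i$. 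Strong convergence then forces $\|\lambda_{\Gamma}(z)\|=\lim_{i}\|\rho_{i}(z)\|=0$, so $\lambda_{\Gamma}(\gamma)=\lambda_{\Gamma}(\id)$; but $\C[\Gamma]\into C_{\red}^{*}(\Gamma)$, so $\lambda_{\Gamma}$ is faithful on $\Gamma$ and $\gamma=\id$, a contradiction. This proves that a $\PMF$ group is residually linear.

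For the second assertion I would invoke Malcev's theorem that every finitely generated linear group is residually finite. Suppose $\Gamma$ is $\PMF$ and finitely generated; by the first part it embeds into $\prod_{i}\mathrm{GL}_{n_{i}}(\C)$. Given $\gamma\in\Gamma\setminus\{\id\}$, injectivity of the product embedding gives a coordinate projection $\pi_{i}:\Gamma\to\mathrm{GL}_{n_{i}}(\C)$ with $\pi_{i}(\gamma)\neq\Id$. The image $\pi_{i}(\Gamma)$ is a finitely generated linear group, hence residually finite by Malcev, so some finite quotient of $\pi_{i}(\Gamma)$ — and therefore a finite quotient of $\Gamma$ — separates $\gamma$ from $\id$. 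Thus $\Gamma$ is residually finite. Equivalently, a $\PMF$ group is either residually finite or not finitely generated.

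The argument is essentially formal, so there is no substantial obstacle; the only points needing care are (i) the observation that "killed by every homomorphism to some $\mathrm{GL}_{n}$" includes the given unitary representations, which holds since $\U(n)\subseteq\mathrm{GL}_{n}(\C)$, and (ii) in the random setting, that $\|\rho_{i}(z)\|$ is the constant $0$, so convergence almost surely or in probability to $\|\lambda_{\Gamma}(z)\|$ pins the (deterministic) value $\|\lambda_{\Gamma}(z)\|$ to $0$, after which faithfulness of $\lambda_{\Gamma}$ on $\Gamma$ finishes the contradiction.
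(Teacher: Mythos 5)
Your proof is correct and follows essentially the same route as the paper: the contradiction via $z=\gamma-\id$, using strong convergence and the faithfulness of $\C[\Gamma]\into C_{\red}^{*}(\Gamma)$, and then Malcev's theorem for the residual finiteness of finitely generated residually linear groups. The extra care you take with the random case and with projecting to a single linear factor before applying Malcev is just a fleshed-out version of the paper's terser argument.
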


On the other hand, if $\Gamma$ is countable and RF then by projecting
to large enough factors of the $\prod_{i}S_{n_{i}}$ we obtain a sequence
of homomorphisms $\phi_{j}:\Gamma\to S_{N_{j}}$ such that each $\phi_{j}$
injects on a finite set $S_{j}$ such that $\cup_{j}S_{j}=\Gamma.$
For any $z\in\C[\Gamma]$ let $\Phi_{j}$ denote the unitary representation
of $\Gamma$ obtained by composing $\phi_{j}$ with the standard representation
of $S_{N_{j}}$.

Because we eventually inject on the support of $(zz^{*})^{p}$ we
have then
\[
\tr[\Phi_{j}(zz^{*})^{p}]=\tau((zz^{*})^{p})-\frac{1}{N_{j}}\epsilon((zz^{*})^{p})
\]
where $\epsilon$ is the state associated to the trivial representation
of $\Gamma$, for $j\gg_{p}1$. Since the trivial representation is
weakly contained in the regular representation of $\Gamma$ we have
\[
\epsilon((zz^{*})^{p})\leq\|\lambda(z)\|^{2p}.
\]
Hence we also have 
\[
\|\Phi_{j}(z)\|^{2p}=\|\Phi_{j}(zz^{*})^{p}\|\geq\tr[\Phi_{j}(zz^{*})^{p}]\to_{i\to\infty}\tau((zz^{*})^{p})=(\|\lambda(z)\|+o_{p\to\infty}(1))^{2p}
\]
 where the last equality used that the trace $\tau$ is faithful.
Taking $p$ large and fixed and letting $i\to\infty$ for each large
$p$ we obtain
\[
\liminf_{j\to\infty}\|\Phi_{j}(z)\|\geq\|\lambda(z)\|.
\]
But on the other hand for amenable groups $\|\lambda(z)\|\geq\|\Phi_{i}(z)\|$
for all $z\in\C[\Gamma]$ because all finite dimensional unitary representations
of amenable groups are weakly contained in the regular representation.
So in fact 
\[
\lim_{j\to\infty}\|\Phi_{j}(z)\|=\|\lambda(z)\|.
\]
Hence in summary \uline{for amenable groups}
\[
\mathrm{countable}+\mathrm{RF}\implies\PPF
\]
and 
\[
\PMF\implies\mathrm{RL},
\]
\[
\mathrm{f.g.}+\,\PMF\implies\mathrm{RF}.
\]
What is perhaps surprising is that \uline{all} amenable groups have
f.d. \emph{approximate representations} (in norm sense) that strongly
converge to the regular representation \cite{TikuisisWhiteWinter}.
This appears to leave open the question of whether non-countable discrete
amenable groups are $\PMF$ or $\PPF$.

\subsection{Free groups}

Let $\free_{r}$ denote a free non-abelian group of rank $r\geq2$.

The original interest in property $\PMF$ for non-amenable groups
comes from an observation of Voiculescu \cite{voic_quasidiag} that
establishing `$\free_{r}$ is $\PMF$' --- albeit not in this language
--- would settle the then outstanding problem in operator algebras
to prove the $K$-theoretic construct $\Ext(C_{\red}^{*}(\free_{r}))$
has non-invertible elements. 

Motivated by this application to $K$-theory, Haagerup and Thorbjørnsen
proved $\free_{r}$ is $\PMF$ in the main theorem of \cite{HaagerupThr}.
Importantly, this, and all subsequent proofs of this fact, rely on
random matrix theory. As a result, there is a lacuna in the field:
we do not know how to construct explicit f.d. representations of free
groups that strongly converge to the regular representation\footnote{This problem seems known in the community, it is certainly not my
own, and it was highlighted to me by Avi Wigderson.}
Haagerup and Thorbjørnsen used GUE random matrices and an application
of functional calculus to prove their result, and as a result, it
left open the question of whether Haar distributed random representations
of $\free_{r}$ a.s. strongly converge to the regular representation.
This was proved by Collins and Male in \cite{Collins2014}.

Later on, Bordenave and Collins \cite{BordenaveCollins} proved that
$\free_{r}$ is $\PPF$ by use of random permutation representations.
To explain their motivation and to explain the motivation for $\PPF$
in general, we detour to discuss Friedman's theorem.

Friedman's theorem \cite{Friedman}, formerly conjecture of Alon \cite{Alon},
states informally that random regular graphs are almost optimal expanders.
Alon did not state which random model to use, but besides, there are
contiguity results that tell us it does not matter for the purposes
of proving the above statement \cite{Wormald}. For simplicity of
exposition assume that the random regular graph has degree $2r$ and
$n$ vertices, with $r$ fixed and $n\to\infty$.

If $\sigma_{1},\ldots,\sigma_{r}$ are i.i.d. uniformly random elements
of $S_{n}$ then the Schreier graph for these generators and the action
of $S_{n}$ on $\{1,\ldots,n\}$ has adjacency matrix
\[
2r\cdot\id_{\C}\oplus\text{\ensuremath{\left[\std(\sigma_{1})+\std(\sigma_{1}^{-1})+\cdots+\std(\sigma_{r})+\std(\sigma_{r}^{-1})\right]}}.
\]
The $2r\cdot\id_{\C}$ factor corresponds to the trivial eigenvalue
$2r$ and in this setting, asymptotically optimal expansion amounts
to
\[
\|\std(\sigma_{1})+\std(\sigma_{1}^{-1})+\cdots+\std(\sigma_{r})+\std(\sigma_{r}^{-1})\|\to2\sqrt{2r-1}
\]
(in probability) where the right hand side is optimal by a result
of Alon--Boppana \cite{Nilli}. To put the above in a more symmetric
form, and to connect to our other discussions, if 
\[
\phi(x_{i})\eqdf\sigma_{i},\quad\rho(x_{i})=\std(\phi(x_{i}))=\std(\sigma_{i})
\]
 then $\rho_{i}$ is a random representation of the type appearing
in the definition of $\PPF$ and Friedman's theorem states
\[
\|\rho_{i}(x_{1}+x_{1}^{-1}+\cdots+x_{r}+x_{r}^{-1})\|\to\|\lambda_{\free_{r}}(x_{1}+x_{1}^{-1}+\cdots+x_{r}+x_{r}^{-1})\|=2\sqrt{2r-1}
\]
in probability. This is a very restricted version of $\PPF$ concerning
only the linear element
\[
z=x_{1}+x_{1}^{-1}+\cdots+x_{r}+x_{r}^{-1}\in\C[\free_{r}]
\]
 whereas $\PMF$ and $\PPF$ ask for the same for all(!) elements
of $\C[\Gamma]$. This is precisely what Bordenave--Collins prove
in \cite{BordenaveCollins}: for all $z\in\C[\free_{r}]$
\[
\|\rho_{i}(z)\|\to\|\lambda_{\free_{r}}(z)\|
\]
in probability. We cannot leave this discussion without remarking
that very recently, a beautiful and short proof of Friedman's theorem
has been obtained by Chen, Garza-Vargas, Tropp, and van Handel, \cite{chen2024new}.
They also extend the proof without adding much length to give a new
proof of Bordenave--Collins' theorem.

\subsection{Non-free examples}

\emph{Limit groups and surface groups. }Motivated in part by applications
to hyperbolic surfaces (see $\S$\ref{sec:Applications}), in \cite{louder2023strongly}
Louder and I studied \emph{limit groups: }finitely generated groups
such that for every finite subset, there is a homomorphism to a free
group that injects on that set.
\begin{thm}[Louder--Magee]
Every limit group is $\PPF$.
\end{thm}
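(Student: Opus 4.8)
The plan is to deduce this from the Bordenave--Collins theorem ($\free_r$ is $\PPF$) together with an approximation/diagonal argument exploiting the defining property of limit groups. Fix a limit group $\Gamma$ with a finite generating set, and enumerate the elements of $\C[\Gamma]$ through a countable dense (in a suitable sense) family; for our purposes it suffices to handle a countable exhausting sequence of finite subsets $F_1 \subseteq F_2 \subseteq \cdots$ of $\Gamma$ with $\bigcup_j F_j = \Gamma$, together with all monomials in the generators up to length $j$. By the limit group property, for each $j$ there is a homomorphism $\psi_j : \Gamma \to \free_{r_j}$ that is injective on $F_j$ (we may take $r_j = r$ fixed since every f.g.\ free group embeds in $\free_2$, but keeping $r_j$ does no harm). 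The key point is the analogue of Lemma~\ref{lem:restriction}: for any $z \in \C[\Gamma]$ supported on $F_j$, pushing forward via $\psi_j$ gives $z_j \eqdf (\psi_j)_*(z) \in \C[\free_{r_j}]$ with the same coefficients, and $\|\lambda_\Gamma(z)\| \geq \|\lambda_{\free_{r_j}}(z_j)\|$ always (homomorphisms only collapse), while one needs a matching lower bound in the limit.

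First I would record the elementary inequality: if $\phi : \Gamma \to S_n$ is any homomorphism and $z \in \C[\Gamma]$, then $\phi$ factors through $\psi_j$ on $F_j$ only if $\phi$ itself identifies the same pairs, which we do not want; instead, the right move is to go the other way. Precisely: take the Bordenave--Collins random permutation representations $\phi_i^{(j)} : \free_{r_j} \to S_{n_i}$ that strongly converge to $\lambda_{\free_{r_j}}$, and define $\Phi_i^{(j)} \eqdf \phi_i^{(j)} \circ \psi_j : \Gamma \to S_{n_i}$. Then $\rho_i^{(j)} \eqdf \std \circ \Phi_i^{(j)}$ is a permutation-type representation of $\Gamma$. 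For any $z \in \C[\Gamma]$ supported on $F_j$ we have $\rho_i^{(j)}(z) = (\std \circ \phi_i^{(j)})(z_j)$, so by Bordenave--Collins
\[
\lim_{i\to\infty} \|\rho_i^{(j)}(z)\| = \|\lambda_{\free_{r_j}}(z_j)\|
\]
in probability. The upper bound $\|\lambda_{\free_{r_j}}(z_j)\| \le \|\lambda_\Gamma(z)\|$ is automatic. For the lower bound, one uses that $\psi_j$ is injective on $F_j$, hence on the support of $z$, so the $\ell^2(\Gamma)$-norm computation of $\|\lambda_\Gamma(z)\|$ via traces of $(zz^*)^p$ for $p$ with support inside $F_{j'}$ matches that of $\lambda_{\free_{r_j}}$ once $j$ is large enough relative to the word-length of $z$ and $p$: that is, $\|\lambda_{\free_{r_j}}(z_j)\| \to \|\lambda_\Gamma(z)\|$ as $j \to \infty$ for fixed $z$. (This is the standard fact that the spectral radius of $zz^*$ in a group algebra only depends on finitely many "return probabilities" up to arbitrary precision.)

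Then I would run a diagonal argument: enumerate a countable set $\{z_1, z_2, \ldots\}$ of elements of $\C[\Gamma]$ whose norms we must match (generated over $\Q(i)$ by the generators, say, which suffices by density/continuity of $z \mapsto \|\rho(z)\|$ uniformly on balls), choose $j(k) \to \infty$ so that $\|\lambda_{\free_{r_{j(k)}}}((\psi_{j(k)})_* z_\ell)\|$ is within $1/k$ of $\|\lambda_\Gamma(z_\ell)\|$ for all $\ell \le k$, and choose $i(k) \to \infty$ so that (with probability $\ge 1 - 2^{-k}$) $\|\rho_{i(k)}^{(j(k))}(z_\ell)\|$ is within $1/k$ of $\|\lambda_{\free_{r_{j(k)}}}((\psi_{j(k)})_* z_\ell)\|$ for all $\ell \le k$. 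Borel--Cantelli then gives a single sequence $\rho_{i(k)}^{(j(k))} = \std \circ \Phi_{i(k)}^{(j(k))}$ with $\|\rho_{i(k)}^{(j(k))}(z)\| \to \|\lambda_\Gamma(z)\|$ a.s.\ for every $z$ in the countable family, and then for all $z \in \C[\Gamma]$ by the uniform continuity of evaluation norms on coefficient-balls (here one uses that all these representations are unitary, so $\|\rho(z) - \rho(z')\| \le \sum |\text{coeff differences}|$). Since each $\Phi_{i(k)}^{(j(k))}$ is a genuine homomorphism $\Gamma \to S_{n_{i(k)}}$, this exhibits $\Gamma$ as $\PPF$.

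The main obstacle is the lower bound $\liminf_j \|\lambda_{\free_{r_j}}((\psi_j)_* z)\| \ge \|\lambda_\Gamma(z)\|$: collapsing a group can in principle drop the operator norm, so one must argue that when $\psi_j$ is injective on a large enough ball, the norm is essentially preserved. The clean way is via the trace formula already used in the amenable-groups section: $\|\lambda_\Gamma(z)\| = \lim_{p\to\infty} \tau_\Gamma((zz^*)^p)^{1/2p}$ with $\tau_\Gamma$ faithful, and $\tau_\Gamma((zz^*)^p)$ equals a finite sum of coefficients of $(zz^*)^p$ over group elements; if $\psi_j$ is injective on the support of $(zz^*)^p$ (which lies in a ball of radius $2p \cdot (\text{length of } z)$), then this sum equals $\tau_{\free_{r_j}}((z_j z_j^*)^p)$ exactly, so $\|\lambda_{\free_{r_j}}(z_j)\|^{2p} \ge \tau_{\free_{r_j}}((z_j z_j^*)^p) = \tau_\Gamma((zz^*)^p)$, and letting $p \to \infty$ (with $j = j(p) \to \infty$ fast enough) gives $\ge \|\lambda_\Gamma(z)\|$. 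Combined with the trivial reverse inequality this pins the limit, and the rest is the bookkeeping of the diagonal extraction.
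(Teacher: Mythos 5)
There is a genuine gap, and it sits exactly where you wrote that the upper bound is ``automatic.'' The inequality $\|\lambda_{\free_{r_j}}((\psi_j)_*z)\|\le\|\lambda_\Gamma(z)\|$ is \emph{false} for a general group homomorphism: pushing forward along a quotient map can \emph{increase} the reduced norm unless the kernel is amenable. The standard example is $z=x+x^{-1}+y+y^{-1}\in\C[\free_2]$ pushed to the abelianization $\Z^2$, where the norm jumps from $2\sqrt{3}$ to $4$. Injectivity on a large ball does not repair this by itself: quotients of $\free_2$ onto finite groups of large girth (e.g.\ $\mathrm{PSL}_2(\mathbb{F}_p)$ with standard generators) are injective on balls of radius tending to infinity, yet the pushforward of $z$ always has norm $4$. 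So the upper bound is not a formal consequence of the limit-group property; it is the actual content of the theorem. Your lower bound via moments (injectivity on the support of $(zz^*)^p$ gives $\tau_{\free}((z_jz_j^*)^p)=\tau_\Gamma((zz^*)^p)$, then $p\to\infty$) is correct and standard, and the Bordenave--Collins plus diagonal/Borel--Cantelli bookkeeping is fine, but these only give $\liminf\ge\|\lambda_\Gamma(z)\|$.

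One could hope to salvage the upper bound using that the target is free: Haagerup's inequality gives $\|\lambda_{\free}(\psi_j((zz^*)^p))\|\le C(1+2pLM_j)^{3/2}\|\psi_j((zz^*)^p)\|_{\ell^2}$, where $M_j$ is the maximal word length of the images of the generators under $\psi_j$; if $\psi_j$ is injective on the ball of radius $2pL$ this is at most $C(1+2pLM_j)^{3/2}\|\lambda_\Gamma(z)\|^{2p}$, and taking $2p$-th roots one wins \emph{provided} the injectivity radius of $\psi_j$ grows faster than $\log M_j$. The defining property of limit groups supplies, for each finite set, \emph{some} homomorphism to a free group injective on it, but gives no control whatsoever on $M_j$ relative to the injectivity radius, so the estimate does not close. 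This quantitative trade-off is precisely what the actual proof supplies: Sela's theorem embeds the limit group into an iterated sequence of extensions of centralizers starting from a free group, a quantitative version of Baumslag's lemma produces retractions that are injective on large balls with controlled distortion, and Haagerup's inequality is then used as above to rule out norm inflation. Without an argument of this kind (or some substitute for the upper bound), your proof establishes only one of the two inequalities needed for strong convergence.
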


The full strength of this result uses a result of Sela \cite{SelaI,cg}
stating that every limit group embeds into an iterated sequence of
(extensions of centralizers), beginning with a free group. This is
combined with a quantitative version of a lemma of Baumslag \cite{baumslag_on_generalised}
and Haagerup's inequality \cite{Haagerup}. The following is one of
the motivations.
\begin{cor}
\label{cor:surfaces}The fundamental group of an orientable surface
of genus at least two is $\PPF$.
\end{cor}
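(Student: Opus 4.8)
The plan is to deduce the corollary from the theorem of Louder and Magee quoted above: every limit group is $\PPF$. So it suffices to show that $G_{g}\eqdf\pi_{1}(\Sigma_{g})$, the fundamental group of a closed orientable surface of genus $g\ge2$, is a limit group --- that is, finitely generated (immediate, since $G_{g}$ has the $2g$ standard generators) and fully residually free (for every finite $S\subseteq G_{g}\setminus\{1\}$ there is a homomorphism from $G_{g}$ to a free group that is nontrivial on each element of $S$). If the surface is not closed then $\pi_{1}$ is free, hence already a limit group, and the statement is immediate; so assume $\Sigma_{g}$ is closed. Moreover it is enough to treat $g=2$: a connected degree-$(g-1)$ cover of $\Sigma_{2}$ is a closed orientable surface of Euler characteristic $-2(g-1)=\chi(\Sigma_{g})$, hence is $\Sigma_{g}$, so $G_{g}$ embeds as an index-$(g-1)$ subgroup of $G_{2}$. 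Once $G_{2}$ is shown to be a limit group it is $\PPF$ by the theorem, and then $G_{g}\le G_{2}$ is $\PPF$ for every $g\ge2$ by Lemma \ref{lem:restriction}.

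To show $G_{2}$ is fully residually free, I would use the splitting of its standard presentation as an amalgamated free product over an infinite cyclic subgroup,
\[
G_{2}\;=\;\langle a_{1},b_{1}\rangle\ast_{C}\langle a_{2},b_{2}\rangle,\qquad C=\langle[a_{1},b_{1}]\rangle=\langle[b_{2},a_{2}]\rangle,
\]
obtained topologically by cutting $\Sigma_{2}$ along a separating simple closed curve into two one-holed tori: each factor $A\eqdf\langle a_{1},b_{1}\rangle$, $B\eqdf\langle a_{2},b_{2}\rangle$ is free of rank $2$, and the edge group $C$ is generated in each factor by a commutator, which is not a proper power and therefore generates a maximal cyclic, malnormal subgroup. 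There is moreover a retraction $r\colon G_{2}\twoheadrightarrow A$ onto one factor, given on generators by $a_{1}\mapsto a_{1}$, $b_{1}\mapsto b_{1}$, $a_{2}\mapsto b_{1}$, $b_{2}\mapsto a_{1}$; this is consistent with the amalgamation because $[b_{2},a_{2}]\mapsto[a_{1},b_{1}]$.

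Full residual freeness of $G_{2}$ then follows from Baumslag's lemma \cite{baumslag_on_generalised} on generalised free products --- the very lemma that, in quantitative form, drives the Louder--Magee theorem. Concretely, let $\phi$ be the ``Dehn twist'' automorphism of $G_{2}$ that fixes $A$ pointwise and conjugates $a_{2}$ and $b_{2}$ by a generator $c$ of $C$. Given $w_{1},\dots,w_{k}\in G_{2}\setminus\{1\}$, Baumslag's lemma supplies an $N$ so large that $r(\phi^{N}(w_{j}))\neq1$ in $A$ for every $j$: the twist drags the $B$-syllables of each $w_{j}$ apart by high powers of $c$ so that the retraction no longer collapses them, and the malnormality of $C$ in $A$ and in $B$ supplies the non-conjugacy hypotheses the lemma requires. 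Thus $r\circ\phi^{N}\colon G_{2}\to A\cong\free_{2}$ is a homomorphism to a free group that is nontrivial on $\{w_{1},\dots,w_{k}\}$, so $G_{2}$ is a limit group, and the corollary follows.

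The step that needs care is the application of Baumslag's lemma: one must check that the edge group $C$ sits inside each free factor exactly as the lemma demands (maximality of $\langle c\rangle$ and the relevant non-conjugacy of its elements), which is precisely where the geometry of the surface enters --- routine, but not purely formal. An alternative is to bypass the argument above and simply cite the classical fact, traceable to G. Baumslag \cite{baumslag_on_generalised} and a clean consequence of Sela's description of limit groups \cite{SelaI}, that closed orientable surface groups of genus $\ge2$ are limit groups; the corollary is then an immediate instance of the theorem.
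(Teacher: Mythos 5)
Your argument is correct, but it verifies the hypothesis of the Louder--Magee theorem by a different route than the paper does. The paper does not pass through the abstract statement ``closed surface groups are limit groups''; instead it writes down an explicit embedding of the genus-two group $\langle a,b,c,d\mid [a,b][c,d]\rangle$ into the single extension of centralizers $\langle a,b,t\mid [t,[a,b]]\rangle$ via $c\mapsto b^{t}$, $d\mapsto a^{t}$, with injectivity checked by normal forms for the amalgam $\free_{2}\ast_{\langle[a,b]\rangle}(\langle[a,b]\rangle\times\Z)$, and then handles higher genus exactly as you do, by covering spaces together with Lemma \ref{lem:restriction}. The point of that choice is that the extension-of-centralizers case is covered by the part of the Louder--Magee proof that does not need Sela's structure theory, whereas your route --- prove (or cite) that $G_{2}$ is fully residually free and then invoke ``every limit group is $\PPF$'' as a black box --- is a perfectly valid deduction from the stated theorem but quietly re-imports Sela: the general theorem is proved by embedding an arbitrary limit group into an iterated extension of centralizers, so full residual freeness of $G_{2}$ alone does not buy you the Sela-free proof that the paper is advertising. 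The two arguments are in any case close cousins: your maps $r\circ\phi^{N}$ are (up to the direction of the twist) exactly the compositions of the paper's embedding with the retractions of $\langle a,b,t\mid[t,[a,b]]\rangle$ onto $\langle a,b\rangle$ sending $t\mapsto[a,b]^{N}$, so Baumslag's lemma in your version plays the role that the amalgamated-product normal form plays in the paper's. Your sketch of the Baumslag step is essentially right --- the inputs you need are that $[a_{1},b_{1}]$ is not a proper power, so its centralizer in each vertex group is the cyclic edge group, and that the retraction is injective on $B$ and carries $C$ onto $\langle c\rangle$, so every syllable caught between the big powers of $c$ fails to commute with $c$ --- and your Euler-characteristic reduction to genus two is the same as the paper's.
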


While stated as a corollary, Sela's work is not needed here as a genus
two surface group 
\[
\langle a,b,c,d\,|\,[a,b][c,d]\,\rangle
\]
explicitly embeds to the extension of centralizers
\[
\langle a,b,t\,|\,[t,[a,b]]\,\rangle
\]
via $a\mapsto a,b\mapsto b,c\mapsto b^{t},d\mapsto a^{t}$; the injectivity
of this map uses normal form for amalgamated products. Higher genus
surface groups embed into this example via covering spaces.

\emph{Right-angled Artin Groups and related examples. }A finitely
generated right-angled Artin group (RAAG)\emph{ }is generated by finitely
many generators with only relations that some subset of the pairs
of generators are commuting pairs. Besides interpolating between free
non-abelian and free abelian groups, they turn out to be fairly universal
in that many natural families of groups virtually embed into RAAGs.
That is, they embed after passing to a finite index subgroup. This
includes the following classes of groups.
\begin{enumerate}
\item Closed hyperbolic three manifold fundamental groups. (Bergeron--Wise
\cite{BWBoundary} and Agol -- \cite{Agol})
\item Non-compact finite volume three manifold fundamental groups\footnote{I thank Jean-Pierre Mutanguha for making me aware of this\@.}.
(Wise -- \cite[Thm. 14.29]{WiseBook})
\item Arithmetic `standard type' hyperbolic $n$ manifold groups with $n\geq4$.
(Bergeron--Haglund--Wise \cite{BHW})
\item Any Coxeter group. (Haglund--Wise \cite{HaglundWiseCoxeter})
\item Any one-relator group with torsion. (Wise -- \cite[Cor. 18.1]{WiseBook})
\item Any word-hyperbolic cubulated group. (Agol -- \cite{Agol})
\end{enumerate}
The above results also all rely on a result of Haglund and Wise \cite{HaglundWiseSpecial}
stating that fundamental groups of compact special cube complexes
embed into RAAGs.

By Lemmas \ref{lem:restriction} and \ref{lem:Induction1}, $\PMF$
passes from RAAGs to those groups above. The `master theorem' that
RAAGs are indeed P$\Mat$F was obtained in joint work with Thomas
\cite{MageeThomas}.
\begin{thm}[Magee--Thomas]
Finitely generated RAAGs are $\PMF$.
\end{thm}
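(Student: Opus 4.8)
The plan is to prove that finitely generated RAAGs are $\PMF$ by combining the free group base case (Haagerup--Thorbjørnsen/Collins--Male, already available) with a construction that builds strongly convergent representations for the whole RAAG out of Haar-random unitary representations on the free part together with carefully chosen fixed unitaries implementing the commuting relations. Concretely, write the RAAG $A_\Gamma$ on a graph $\Gamma$ with vertex set $V$; I would look for a sequence of homomorphisms $\rho_n:A_\Gamma\to\U(n)$ of a model form in which each generator $v$ is sent to a matrix built from (a) an i.i.d. Haar-random unitary $U_v\in\U(m)$ and (b) a tensor/amplification pattern that is constant in $n$ and enforces exactly the edge relations $[v,w]=1$ for $vw\in E(\Gamma)$, and nothing more. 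The goal statement is then: for every $z\in\C[A_\Gamma]$, $\|\rho_n(z)\|\to\|\lambda_{A_\Gamma}(z)\|$ almost surely (or in probability). The natural machine for this is a strong asymptotic freeness statement: the random unitaries $\{U_v\}$ become, in the strong sense, freely independent Haar unitaries, and one needs to show that the particular deterministic matrices used to glue them are themselves strongly convergent to their limit in the appropriate free product / graph product von Neumann algebra, and that strong convergence is preserved under the algebraic operations (tensor products, direct sums, conjugation by fixed unitaries) used in the model.

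The key steps, in order, would be: (1) Identify the correct von Neumann-algebraic target. The group von Neumann algebra $L(A_\Gamma)$ is the \emph{graph product} of the $L(\Z)$'s over $\Gamma$, in the sense of Caspers--Fima; one wants a random matrix model whose limit (in distribution, then in norm) is exactly this graph product with each vertex algebra in its standard trace. (2) Set up the model: I expect something like an inductive construction over an elimination ordering of $V$, or alternatively a single global model where generator $v$ acts on $\bigotimes_{w} \mathcal H_w$ (each $\mathcal H_w$ a copy of $\C^{m}$ or $\ell^2$) as a Haar unitary $U_v$ on the $v$-tensor leg times a fixed flip/identity pattern on the other legs dictated by adjacency — this is the standard way to realize graph products as "partial tensor, partial free" objects. (3) Prove strong convergence of this model: first get weak convergence (convergence of $*$-moments) by a genuine moment/combinatorial computation showing the empirical mixed moments converge to the graph product trace — here one uses Weingarten calculus for the Haar pieces and the explicit combinatorics of the fixed pieces — and then upgrade to norm convergence. (4) For the norm upgrade, use one of the now-standard routes: either the Collins--Male transfer (strong asymptotic freeness of independent Haar unitaries, plus the observation that tensoring with fixed strongly convergent matrices and taking direct sums preserves strong convergence — a "stability of strong convergence" lemma), or the linearization/operator-valued semicircular method à la Haagerup--Thorbjørnsen and Bordenave--Collins, or the very recent polynomial method of Chen--Garza-Vargas--Tropp--van Handel. (5) Conclude: since $\rho_n$ is an honest homomorphism $A_\Gamma\to\U(n)$ (the relations hold exactly in the model, not just asymptotically) and $\|\rho_n(z)\|\to\|(\text{image of }z\text{ in the graph product})\|=\|\lambda_{A_\Gamma}(z)\|$ for all $z\in\C[A_\Gamma]$, $A_\Gamma$ is $\PMF$.

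The main obstacle, I expect, is step (3)--(4): proving \emph{strong} (norm) convergence for the graph-product model rather than just convergence of moments. The partial-commutation structure means the random matrices are neither globally free (so Haagerup--Thorbjørnsen/Collins--Male does not apply off the shelf) nor a simple tensor product of independent models (so one cannot just tensor known results), and the fixed "gluing" matrices grow in the natural model, so one must prove their strong convergence as part of the argument and then show strong convergence is stable under the exact combination of operations used to assemble $\rho_n$. The cleanest formulation of what is needed is probably a lemma: \emph{if $\mathbf X^{(n)}\to \mathbf X$ strongly and $\mathbf Y^{(n)}\to\mathbf Y$ strongly with the pairs asymptotically free (or asymptotically "free with respect to the graph pattern"), then the graph-product combination converges strongly.} Establishing such a stability statement in sufficient generality — ideally by an induction on $|V|$ that at each step adds one vertex either freely or amalgamated over the commuting neighbours — is the technical heart of the theorem, and is exactly where the Magee--Thomas argument must do real work beyond citing the free group case.

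A secondary, more bookkeeping-level difficulty is handling the \emph{finitely generated} hypothesis cleanly: one must ensure the elimination/induction and the moment computations only ever involve finitely many vertices and finitely many relations, so that Weingarten estimates and concentration bounds apply with uniform control; this is routine but must be tracked carefully, since allowing infinitely generated RAAGs would break the random matrix model (consistent with Lemma~\ref{lem:not-PMF}, which only forces residual linearity in general).
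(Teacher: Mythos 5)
The statement you are proving is only cited in this survey (the proof is in \cite{MageeThomas}), so there is no in-paper argument to compare against line by line; judged on its own terms, your outline has the right architecture — a tensor-leg model in which adjacent generators act on disjoint legs (so the edge relations hold exactly) and non-adjacent generators share a leg carrying independent Haar unitaries, with the limit identified as the graph product and weak convergence handled by Weingarten/$\epsilon$-independence combinatorics. This is indeed the shape of the Magee--Thomas construction. But the proposal has a genuine gap at precisely the step you flag as ``the technical heart,'' and the off-the-shelf routes you offer in step (4) do not close it. The difficulty is not ``stability of strong convergence under tensoring with fixed matrices and direct sums'': tensoring a strongly convergent family with a fixed (or even growing) identity is trivial for each family separately, but strong convergence of the \emph{joint} family $\{U_i\otimes 1_N,\ 1_N\otimes V_j\}$, where \emph{both} legs have dimension growing with $N$, is not a formal consequence of Collins--Male \cite{Collins2014} or of the Haagerup--Thorbj{\o}rnsen linearization \cite{HaagerupThr}. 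It is equivalent to strong convergence of Haar unitaries with coefficients in a matrix algebra of growing dimension, which is a deep quantitative theorem, not a bookkeeping lemma; this is exactly the statement Hayes \cite{Hayes2022} showed would imply the Peterson--Thom conjecture \cite{Peterson2011}, and it was only recently established (for GUE by Belinschi--Capitaine, for Haar unitaries by Bordenave--Collins, and by the polynomial-method descendants of \cite{chen2024new}; see also \cite{MageeSalle2} for coefficient growth for $\U(n)$). Since $\free_2\times\free_2$ is already a RAAG, your proposed ``stability lemma'' would have to contain this theorem as its very first nontrivial case, so invoking it as a lemma to be established by an induction on vertices is circular unless you supply the quantitative input (norm bounds, or moment bounds with matrix coefficients of dimension comparable to $N$, strong enough to survive the conditioning-leg-by-leg induction over the graph).

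Concretely, what is missing is: (i) a precise strong convergence statement with operator coefficients in $M_{N'}(\C)$ for $N'$ growing with $N$ (with an explicit admissible growth rate), proved by a quantitative method (master inequalities/interpolation, or the recent polynomial method), and (ii) the verification that conditioning on the unitaries attached to all but one tensor leg reduces the graph model to exactly such a coefficient-valued statement, uniformly over the finitely many legs, so that the induction over vertices closes. Your secondary concern about finite generation is indeed routine once (i)--(ii) are in place. As written, however, the proposal reduces the theorem to an unproved (and, in the generality stated, non-formal) stability principle, so it does not constitute a proof.
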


It is not true however that in general RAAGs are P$\Perm$F. The following
proposition has not appeared elsewhere in print.
\begin{prop}
\label{prop:F23isnotppF}$\mathbf{F_{2}}\times\mathbf{F_{2}}\times\mathbf{F_{2}}$
is not $\PPF$.
\end{prop}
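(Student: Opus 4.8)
The plan is to show that the strong-convergence hypothesis forces each $\phi_{i}$ to have \emph{abelian} image for all large $i$, which is absurd since $\lambda_{\mathbf{F_2}^3}$ is faithful on $\C[\mathbf{F_2}^3]$. Write $\mathbf{F_2}^{(k)}$ ($k=1,2,3$) for the three factors with free generators $a_{k},b_{k}$, suppose $\phi_{i}\colon\mathbf{F_2}^3\to S_{n_{i}}$ witnesses $\PPF$, and let $P_{i}\colon\mathbf{F_2}^3\to O(\C^{n_{i}})$ be the associated permutation representation, so that $\rho_{i}\eqdf\std\circ\phi_{i}=P_{i}|_{\std}\tos\lambda_{\mathbf{F_2}^3}$, where $\std\subseteq\C^{n_{i}}$ is the codimension-one $S_{n_{i}}$-invariant subspace. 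The first step is to force each factor to act \emph{transitively} on $[n_{i}]$. For $w_{k}\eqdf a_{k}+a_{k}^{-1}+b_{k}+b_{k}^{-1}\in\C[\mathbf{F_2}^{(k)}]\subseteq\C[\mathbf{F_2}^3]$, Lemma~\ref{lem:restriction} and Kesten's theorem give $\|\lambda_{\mathbf{F_2}^3}(w_{k})\|=\|\lambda_{\mathbf{F_2}}(a+a^{-1}+b+b^{-1})\|=2\sqrt{3}$. On the other hand $P_{i}(w_{k})$ is self-adjoint of norm $4$, and its $4$-eigenspace is spanned by the indicator vectors of the $\mathbf{F_2}^{(k)}$-orbits on $[n_{i}]$; hence $\rho_{i}(w_{k})=P_{i}(w_{k})|_{\std}$ still has $4$ as an eigenvalue whenever $\mathbf{F_2}^{(k)}$ has more than one orbit. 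Since $\|\rho_{i}(w_{k})\|\to 2\sqrt{3}<4$, for all large $i$ each of $\mathbf{F_2}^{(1)},\mathbf{F_2}^{(2)},\mathbf{F_2}^{(3)}$ acts transitively on $[n_{i}]$.

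Next I would exploit the rigidity of a triple of pairwise-commuting transitive actions. Transitivity of $\mathbf{F_2}^{(1)}$ identifies $[n_{i}]$ with $\mathbf{F_2}^{(1)}/H$ for some finite-index $H$, with $\mathbf{F_2}^{(1)}$ acting by left translation; the centralizer of this action in $\mathrm{Sym}(\mathbf{F_2}^{(1)}/H)$ is $N_{\mathbf{F_2}^{(1)}}(H)/H$ acting by right translation, and that right action is transitive exactly when $H\trianglelefteq\mathbf{F_2}^{(1)}$. Since $\mathbf{F_2}^{(2)}$ commutes with $\mathbf{F_2}^{(1)}$ it lands in this centralizer, and by Step~1 it acts transitively, so $H\trianglelefteq\mathbf{F_2}^{(1)}$. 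Thus $Q\eqdf\mathbf{F_2}^{(1)}/H$ is a finite group with $[n_{i}]\cong Q$, the factor $\mathbf{F_2}^{(1)}$ acts by left translation (so $\phi_{i}(\mathbf{F_2}^{(1)})\cong Q$), and $\mathbf{F_2}^{(2)},\mathbf{F_2}^{(3)}$ act by right translation through surjections onto $Q$. But two right translations of $Q$ commute only when the translating elements commute in $Q$, and since the images of $\mathbf{F_2}^{(2)}$ and of $\mathbf{F_2}^{(3)}$ each exhaust $Q$ while $[\mathbf{F_2}^{(2)},\mathbf{F_2}^{(3)}]=1$ in $\mathbf{F_2}^3$, the group $Q$ must be abelian. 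In particular $\phi_{i}(\mathbf{F_2}^{(1)})\cong Q$ is abelian, for all large $i$.

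Finally, take $\gamma_{0}\eqdf([a_{1},b_{1}],1,1)\in\mathbf{F_2}^3$, which is nontrivial, so $z\eqdf\gamma_{0}-1\in\C[\mathbf{F_2}^3]$ satisfies $\|\lambda_{\mathbf{F_2}^3}(z)\|>0$ by faithfulness of the left regular representation. For all large $i$ the permutations $\phi_{i}(a_{1},1,1)$ and $\phi_{i}(b_{1},1,1)$ lie in the abelian group $\phi_{i}(\mathbf{F_2}^{(1)})$, hence commute, so $\phi_{i}(\gamma_{0})=1$ and $\rho_{i}(z)=\std(1)-\std(1)=0$; thus $\|\rho_{i}(z)\|\not\to\|\lambda_{\mathbf{F_2}^3}(z)\|$, contradicting $\rho_{i}\tos\lambda_{\mathbf{F_2}^3}$. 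The main obstacle is the rigidity in the second step: it is exactly here that three commuting factors are needed, since left and right translations of a nonabelian group already commute, so with only two factors the argument collapses --- as it should, since $\mathbf{F_2}\times\mathbf{F_2}$ is expected to be $\PPF$. Once transitivity has been forced by the strict gap $2\sqrt{3}<4$ and abelianness deduced, the contradiction is immediate.
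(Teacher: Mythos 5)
Your proof is correct, and its skeleton is the same as the paper's: force each factor $\mathbf{F}_2^{(k)}$ to act transitively (the paper does this via non-amenability/absence of invariant vectors, you via Kesten's theorem and the top of the Schreier adjacency spectrum --- the same spectral-gap idea in different clothing), then invoke the rigidity of commuting transitive permutation groups to identify the picture with left/right regular actions of a finite group, and finally use the third factor to get a contradiction. Where you genuinely diverge is the endgame, and your version is in fact the more careful one. The paper asserts that only the identity of $U_j$ can commute with both $\lambda(U_j)$ and $\rho(U_j)$ and concludes $U_j$ is trivial; but right translation by any \emph{central} element commutes with both regular actions, so the correct conclusion from transitivity of the third factor is only that the centre is everything, i.e.\ that the group is abelian (e.g.\ three copies of a cyclic rotation group on $\Z/n$ do give three commuting transitive actions, so triviality cannot be forced at that step). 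Your argument reaches exactly this abelianness conclusion --- via the centralizer $N(H)/H$ of a transitive action, forcing $H$ normal and then $Q$ abelian from the elementwise commutation of the second and third factors --- and then closes the loop cleanly: $\phi_i([a_1,b_1])=1$, so $\rho_i(\gamma_0-1)=0$ while $\|\lambda_{\mathbf{F}_2^3}(\gamma_0-1)\|>0$ by faithfulness of the regular representation, contradicting $\rho_i\tos\lambda_{\mathbf{F}_2^3}$. So your write-up both reproves the finite-group rigidity the paper cites in its footnote and supplies the small repair its final sentence needs; your closing remark about why two factors do not suffice (left and right translations of a nonabelian group commute) is also exactly the right observation.
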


\begin{proof}
Let $G_{i}$ denote the copy of $\mathbf{F}_{2}$ embedded at the
$i$\textsuperscript{th} factor, $i=1,2,3.$ Let $\{\rho_{j}\}_{j=1}^{\infty}$
be a putative sequence of permutation representations of $\mathbf{F_{2}}\times\mathbf{F_{2}}\times\mathbf{F_{2}}$
such that $\std\circ\rho_{i}$ strongly converges to the regular representation.
We interpret $\rho_{j}$ as a linear representation by composition
with the defining representation. Let $H_{i,j}\eqdf\rho_{j}(G_{i})\leq S_{n_{j}}$.
Since $\mathbf{F}_{2}$ is non-amenable, each $H_{i,j}$ has at most
one dimensional space of invariant vectors in the defining representation.
This means that each $H_{i,j}$ must be a transitive subgroup of $S_{n_{j}}$
(acting transitively on $[n_{j}]$) for $j\gg1$. A theorem of finite
group theory\footnote{Indeed, $J_{1}$ must act freely on $[N]$, since $J_{2}$ preserves
the fixed points of any $j\in J_{1}$. This identifies $J_{1}$ with
$[N]$ via $j\mapsto j.1$ and makes the action of $J_{1}$ into the
left regular action. Now $J_{2}\leq\text{\ensuremath{\mathrm{Perms}(J_{1})} }$is
in the centralizer of the left regular action of $J_{1}$, which is
the right regular action of $J_{1}$ \cite[Thm. 6.3.1]{HALL}. But
since $J_{2}$ is transitive, it must be isomorphic to $J_{1}$ and
equal to the whole right regular action.} states that two commuting transitive permutation groups (say $J_{1}$
and $J_{2}$ in $S_{N})$ only arise in the following way: there is
some auxiliary group $U$ with $|U|=N$, and injective morphisms $\iota_{1}:J_{1}\cong\lambda(U)$
and $\iota_{2}:J_{2}\cong\rho(U)$ where $\lambda(U)$ (resp. $\rho(U)$)
is the permutation group induced by multiplication by $U$ on its
left (resp. right).

In particular, in the situation above, $H_{1,j}$ and $H_{2,j}$ can
be identified with $\lambda(U_{j})$ and $\rho(U_{j})$ for some $j$.
There are now different ways to conclude; we note that $H_{3,j}\cong U_{j}$
by the same observation but in the above model, it is $U_{j}$ acting
on itself by permutations and commuting with $\lambda(U_{j})$ and
$\rho(U_{j})$. Only the identity in $U_{j}$ can do this, so $U_{j}$
is the trivial one element group. Then $\std\circ\rho_{j}$ is the
0-dimensional representation for $j\gg1$. This is obviously a contradiction.
\end{proof}
As a byproduct of the above proof, one sees that any putative sequence
of permutation representations of $\mathbf{F_{2}}\times\mathbf{F_{2}}$
such that $\std\circ\rho_{i}$ strongly converges to the regular representation
has to have quite a particular structure. However, there is a candidate
that fits this structure.
\begin{question}
For $n\in\N$ let $\theta_{n}$ and $\phi_{n}$ denote uniformly random
permutation representations $\mathbf{F}_{2}\to S_{n}$. Then 
\[
\lambda\circ\theta_{n}:\mathbf{F}_{2}\to S_{n!}=\Perm(S_{n}),\,\rho\circ\phi_{n}:\mathbf{F}_{2}\to S_{n!}=\Perm(S_{n})
\]
are two commuting permutation representations of $\mathbf{F}_{2}$.
Is it true that as $n\to\infty$,
\[
\std_{n!}\circ[\lambda\circ\theta_{n}\times\rho\circ\phi_{n}]\tos\lambda_{\mathbf{F}_{2}\times\mathbf{F_{2}}}
\]
in probability?
\end{question}

Notice that if the above is true it entails that as $n\to\infty$,
$\lambda\circ\theta_{n}\tos\lambda_{\mathbf{F}_{2}}$ in probability.
This is far from known. In fact it is not known whether 
\begin{equation}
\|\lambda\circ\theta_{n}[x_{1}+x_{2}+x_{1}^{-1}+x_{2}^{-1}]\|\to2\sqrt{3}\label{eq:optimal-gap-cayley-sn}
\end{equation}
in probability; or the same with `2' replaced by any $d\geq0$ and
$\sqrt{3}$ replaced by $\sqrt{2d-1}$. This is even stronger than
the also open question:

\emph{`Are most fixed degree }\emph{\uline{Cayley}}\emph{ graphs of
$S_{n}$ uniform expanders?'}. ((\ref{eq:optimal-gap-cayley-sn})
suggests they are moreover almost optimal expanders)

We note here an important and perhaps relevant result of Kassabov
\cite{Kassabov} giving the \uline{existence} of fixed degree uniformly
expanding Cayley graphs of $S_{n}$ with $n\to\infty$. In this vein
there is a very recent work of E. Cassidy \cite{Cassidy2023a,Cassidy2024}
who proves the strong convergence in probability of 
\[
\rho_{n}:\free_{r}\to\U(N(n))
\]
\[
\rho_{n}(x_{i})\eqdf\pi_{\lambda}(\sigma_{i})
\]
where $\sigma_{i}$ are as above and $\pi_{\lambda}$ is the irreducible
representation of $S_{n}$ corresponding to a Young diagram $\lambda$
with $1\leq|\lambda|\leq n^{\frac{1}{13}}$ boxes. In fact his result
is uniform in this regime of $\lambda$. An analogous result for $U(n)$
(with slightly worse constant) was obtained recently with de la Salle
\cite{MageeSalle2}. Cassidy's result for $|\lambda|$ up to and including
$n$ would answer the above questions about Cayley graphs of $S_{n}$.

The fact that not all RAAGs are $\PPF$ does not have a direct implication
to e.g. closed hyperbolic 3 manifold groups in general because when
one induces a permutation representation from an infinite index subgroup
of a RAAG the resulting permutation representation is of an infinite
set and so cannot be used to potentially prove $\PPF$ for the RAAG.

\subsection{Non-examples\label{subsec:Non-examples}}

We discuss here f.g. residually finite groups\footnote{Non f.g. but residually linear groups have not really been considered
in the context of $\PMF$ to the author's knowledge and this might
be an interesting thing to investigate further.}. With M. de la Salle we established \cite{MageeSalle} that $\SL_{d}(\Z)$
is not $\PMF$ for $d\geq4$. This leaves a curious gap at $d=3$.
The reason for this gap is that we rely on the following fact, established
in \emph{(ibid.):} Every non-trivial finite dimensional unitary representation
of $\SL_{4}(\Z)$ has a non-zero $\SL_{2}(\Z)$-invariant vector.
This in turn means that the action of $\SL_{2}(\Z)$ in this representation
has no spectral gap, and so when restricted to $\SL_{2}(\Z)$, a putative
sequence of representations of $\SL_{4}(\Z)$ that strongly converge
to the regular representation, cannot converge to the regular representation
of $\SL_{2}(\Z)$ --- which does have a spectral gap. In light of
Lemma \ref{lem:restriction} this is a contradiction.

However, there are f.d. irreducible unitary representations of $\SL_{3}(\Z)$
with dimension tending to infinity and without non-zero $\SL_{2}(\Z)$-invariant
vectors. See \emph{(ibid.) }for details --- this example is due to
Deligne. 

\subsection{Connection to the Fell topology on the unitary dual}

Suppose $G$ is a locally compact topological group. We assume general
familiarity with the Fell topology on the unitary dual of $G$ (the
equivalence classes of continuous unitary representations of $G$)
and also the notion of weak containment of representations\footnote{In the Princeton seminar I proved Proposition \ref{prop:abstract-strong-convergence-consequence}
in the case the compact set $K$ was a point. M. de la Salle pointed
out the extension to compacts.}.
\begin{prop}
\label{prop:abstract-strong-convergence-consequence}Suppose that
$G$ is any locally compact group. Suppose that $\pi_{i}:G\to\U(\H_{i})$
are any sequence of representations of $\Gamma$ that strongly convergence
to a unitary representation $\pi_{\infty}$ of $G$ in the sense that
for all $f\in C_{c}(G)$,
\[
\lim_{i\to\infty}\|\pi_{i}(f)\|=\|\pi_{\infty}(f)\|.
\]
Then for any compact subset $K\subset\hat{G}\backslash\mathrm{support}(\pi_{\infty})$,
for $i>i_{0}(K)$, no element of $K$ is weakly contained in $\pi_{i}$.
\end{prop}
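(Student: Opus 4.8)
The plan is to translate both ``strong convergence'' and ``weak containment'' into statements about the full group $C^{*}$-algebra $C^{*}(G)$, and then to exhibit a single good test function. Recall that a representation $\sigma$ is weakly contained in a representation $\pi$ precisely when $\|\sigma(a)\|\le\|\pi(a)\|$ for all $a\in C^{*}(G)$ (equivalently $\ker\pi\subseteq\ker\sigma$), and that $\mathrm{support}(\pi_{\infty})$ is by definition $\{[\sigma]\in\hat{G}:\sigma\prec\pi_{\infty}\}$. Thus the whole statement will follow once I produce, for the given compact $K$, a single function $f\in C_{c}(G)$ together with a constant $c>0$ such that $\|\pi_{\infty}(f)\|<c$ while $\|\sigma(f)\|\ge 4c$ for every $[\sigma]\in K$. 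Indeed, applying the definition of strong convergence to this one $f$ gives $\|\pi_{i}(f)\|\to\|\pi_{\infty}(f)\|<c$, hence an index $i_{0}=i_{0}(K)$ (depending on $K$ only through $f$) with $\|\pi_{i}(f)\|<c$ for $i>i_{0}$; if some $[\sigma]\in K$ were weakly contained in such a $\pi_{i}$, then $4c\le\|\sigma(f)\|\le\|\pi_{i}(f)\|<c$, a contradiction.

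To build $f$, I start from $I:=\ker\pi_{\infty}\subseteq C^{*}(G)$, a closed two-sided $*$-ideal whose hull is exactly $\mathrm{support}(\pi_{\infty})$. Since $K\cap\mathrm{support}(\pi_{\infty})=\emptyset$, every $[\sigma]\in K$ satisfies $\sigma\not\prec\pi_{\infty}$, hence $I\not\subseteq\ker\sigma$, so I may pick $a_{\sigma}\in I$ with $\|\sigma(a_{\sigma})\|>0$. Using the standard lower semicontinuity of $[\tau]\mapsto\|\tau(a)\|$ on $\hat{G}$ for fixed $a\in C^{*}(G)$, the set $V_{\sigma}:=\{[\tau]\in\hat{G}:\|\tau(a_{\sigma})\|>\tfrac12\|\sigma(a_{\sigma})\|\}$ is open and contains $[\sigma]$; these cover $K$, so by compactness finitely many $V_{\sigma_{1}},\dots,V_{\sigma_{m}}$ do. Set $b:=\sum_{j=1}^{m}a_{\sigma_{j}}a_{\sigma_{j}}^{*}\in I$, a positive element with $\pi_{\infty}(b)=0$, and $c_{0}:=\tfrac12\min_{j}\|\sigma_{j}(a_{\sigma_{j}})\|>0$. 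For any $[\sigma]\in K$, lying in some $V_{\sigma_{j}}$, the operator inequality $\sigma(b)\ge\sigma(a_{\sigma_{j}})\sigma(a_{\sigma_{j}})^{*}$ forces $\|\sigma(b)\|\ge\|\sigma(a_{\sigma_{j}})\|^{2}>c_{0}^{2}$. Finally, since $C_{c}(G)$ is dense in $C^{*}(G)$ and $\|\sigma(b)-\sigma(f)\|\le\|b-f\|_{C^{*}(G)}$ uniformly over all $\sigma$, choosing $f\in C_{c}(G)$ close enough to $b$ makes $\|\pi_{\infty}(f)\|$ as small as we wish while keeping $\|\sigma(f)\|$ bounded below uniformly on $K$; rescaling produces the pair $(f,c)$ of the first paragraph.

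The crux, and the only real obstacle, is this uniformization: the hypothesis supplies, point by point in $K$, \emph{some} element of $C^{*}(G)$ witnessing the failure of weak containment in $\pi_{\infty}$, and it must be upgraded to one element that works uniformly across all of $K$. Two mechanisms make this go through: (a) lower semicontinuity of the norm functions on $\hat{G}$ together with compactness of $K$, giving a finite subcover; and (b) the passage from a finite family $a_{1},\dots,a_{m}$ to $\sum_{j}a_{j}a_{j}^{*}$, whose image under any $\sigma$ dominates each $\sigma(a_{j})\sigma(a_{j})^{*}$ in the operator order, hence in norm---this converts finitely many local lower bounds into one global bound. Everything afterwards is soft: approximating $b$ by a compactly supported function is harmless because $\sigma(\cdot)$ is norm-nonincreasing uniformly in $\sigma$, and the final contradiction is just the definition of strong convergence applied to the single function $f$.
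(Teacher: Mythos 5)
Your proof is correct. It follows the same basic strategy as the paper's --- for each $[\sigma]\in K$ produce a witness to the failure of weak containment in $\pi_{\infty}$, upgrade the pointwise witnesses to uniform ones via lower semicontinuity of the norm functions on $\hat{G}$ together with compactness of $K$, and then feed the outcome into the strong convergence hypothesis --- but the packaging differs in two respects. First, the paper stays entirely inside $C_{c}(G)$: it extracts finitely many functions $f_{1},\ldots,f_{r}$ and $\eta>1$ with $\|\pi(f_{j})\|>\eta\|\pi_{\infty}(f_{j})\|$ for every $\pi\in K$ and some $j$, and applies strong convergence to each of these finitely many functions; you instead pass to $C^{*}(G)$, take witnesses inside the ideal $\ker\pi_{\infty}$, sum them into a single positive element $b=\sum_{j}a_{j}a_{j}^{*}$ (positivity converting the finite subcover into one uniform lower bound), and then approximate $b$ by a single $f\in C_{c}(G)$ --- a clean consolidation, though not logically necessary, since finitely many test functions already suffice. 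Second, the paper proves the needed (lower semi)continuity of $\pi\mapsto\|\pi(f)\|$ directly from the definition of the Fell topology via matrix coefficients, keeping the argument self-contained, whereas you quote the standard lower semicontinuity of $[\tau]\mapsto\|\tau(a)\|$ on the dual and the standard identifications (weak containment as norm domination over $C^{*}(G)$, $\mathrm{support}(\pi_{\infty})$ as the hull of $\ker\pi_{\infty}$); these are genuinely standard facts, so nothing is missing, but they carry exactly the content the paper chose to verify by hand.
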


\begin{rem}
To attempt to promote Proposition \ref{prop:abstract-strong-convergence-consequence}
to an if and only if statement in the case $\pi_{\infty}=\lambda_{G}$,
one needs to (at least) also discuss possible discrete series representations
of $G$ (for example, in the case $G=\PSL_{2}(\R)$ this issue is
already present). The point is that a diagonal matrix coefficient
of an integrable discrete series\footnote{Integrable meaning having matrix coefficients in $L^{1}$; not all
discrete series have this property.} will act as a non-zero projection in $\pi_{\infty}=\lambda_{G}$
and therefore needs to act in a non-zero way in any $\pi_{i}$ when
$i$ is sufficiently large.
\end{rem}

\begin{proof}
One can show directly from definition of Fell topology that for any
$f\in C_{c}(G)$, the map
\begin{equation}
\pi\mapsto\|\pi(f)\|\label{eq:fmap}
\end{equation}
 is continuous in the Fell topology. For completeness we give this
argument. Given $\pi\in\hat{G}$, suppose that $\xi$ is such that
$\|\xi\|=1$ and 
\[
\langle\pi(f^{*}\ast f)\xi,\xi\rangle=\langle\pi(f)\xi,\pi(f)\xi\rangle>\|\pi(f)\|^{2}-\epsilon.
\]
Essentially by definition of Fell topology (see \cite[Prop. F.2.4]{BdlHV})
there is open set around $\pi$ consisting of $\pi'$ such that 
\[
|\langle\pi'(g)\xi',\xi'\rangle-\langle\pi(g)\xi,\xi\rangle|<\frac{\epsilon}{1+\|f^{*}\ast f\|_{L^{1}}}
\]
 for some $\xi'$ and for all $g$ in the support of $f^{*}\ast f$.
This gives $\|\xi'\|^{2}>1-\epsilon$ (take $g=1)$ and integrating
with $f^{*}\ast f$ weights 
\begin{align*}
\|\pi'(f)\|^{2}(1-\epsilon) & >\|\pi'(f)\|^{2}\|\xi'\|^{2}\geq\langle\pi'(f^{*}\ast f)\xi',\xi'\rangle>\langle\pi(f^{*}\ast f)\xi,\xi\rangle-\epsilon\\
 & >\|\pi(f)\|^{2}-2\epsilon
\end{align*}
for $\pi'$ in this open set. This proves continuity of (\ref{eq:fmap}).

Now let $K$ be as in the statement of the proposition. For every
$\pi\in K$, as $\pi$ is not in the support of $\pi_{\infty}$ there
exists $f_{\pi}\in C_{c}(G)$ and $\eta_{\pi}>1$ such that $\|\pi(f_{\pi})\|>\eta_{\pi}\|\pi_{\infty}(f_{\pi})\|$.
By the previous assertion there is on open neighborhood $W_{\pi}$
of $\pi$ where this inequality still holds. By compactness of $K$
we then obtain a finite list of functions $f_{1},\ldots,f_{r}\in C_{c}(G)$
and $\eta>1$ such that for all $\pi\in K$, 
\begin{align*}
\|\pi(f_{j})\| & >\eta\|\pi_{\infty}(f_{j})\|
\end{align*}
for some $f_{j}$. But for large enough $i>i_{0}$, from strong convergence
\[
\|\pi_{i}(f_{j})\|<\eta\|\pi_{\infty}(f_{j})\|
\]
 for all $j$. Combining the above two inequalities gives for some
$f_{j}$, $\|\pi_{i}(f)\|<\|\pi(f_{j})\|$ so $\pi$ is not in the
support of $\pi_{i}$.
\end{proof}

\section{Applications\label{sec:Applications}}

\subsection{An induction principle}

The following theorem is at the heart of applications of strong convergence
to spectral geometry.
\begin{thm}
\label{thm:induction}Suppose $G$ is locally compact and $\Gamma$
is a cocompact lattice in $G$. If $\rho_{i}\tos\rho_{\infty}$ then
\[
\Ind_{\Gamma}^{G}\rho_{i}\tos\Ind_{\Gamma}^{G}\rho_{\infty}.
\]
\end{thm}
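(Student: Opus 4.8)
The plan is to reduce the statement about $\Ind_\Gamma^G \rho_i$ to the strong convergence hypothesis $\rho_i \tos \rho_\infty$ by analyzing the induced representation concretely as a space of sections. Fix a compact set $C \subset G$ whose $\Gamma$-translates cover $G$. Since $C_c(G)$ is what we must test against, I would first record that the $C^*$-algebraic norm of $(\Ind_\Gamma^G \rho)(f)$ for $f \in C_c(G)$ can be computed from the associated operator on $L^2(\Gamma \backslash G, \mathcal{H}_\rho$-bundle$)$, and that convolution by $f$ on this model is an integral operator with a kernel that is ``$\Gamma$-finite'': because $\mathrm{supp}(f)$ is compact and $\Gamma$ acts cocompactly, only finitely many group elements $\gamma_1, \ldots, \gamma_m \in \Gamma$ (depending on $f$ and $C$, not on $\rho$) enter the kernel. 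The upshot of this bookkeeping step is that $(\Ind_\Gamma^G \rho)(f)$ is, up to a fixed unitary identification independent of $\rho$, a ``block operator'' built out of the matrices $\rho(\gamma_1), \ldots, \rho(\gamma_m)$ together with fixed scalar kernels coming from $f$ and the choice of fundamental domain. More precisely I expect an identity of the shape
\[
(\Ind_\Gamma^G \rho)(f) \;=\; \sum_{k=1}^m M_k \otimes \rho(\gamma_k)
\]
as operators on $L^2(C) \otimes \mathcal{H}_\rho$ (after trivializing the bundle over $C$), where each $M_k \in \mathcal{B}(L^2(C))$ depends only on $f$, $C$, and the cocycle of the $\Gamma$-action, but not on $\rho$.

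Granting such a decomposition, the result follows from an approximation-plus-abstract-norm argument. The operators $M_k$ will typically not have finite rank, but they are norm-limits of finite-rank operators $M_k^{(N)}$ (e.g. by compactness of $C$ and continuity of the kernels, the $M_k$ are compact, or at least can be approximated in operator norm on the relevant subspace). Replacing $f$ by a suitable approximant, or truncating, I get for each $\e>0$ a finite collection of group elements $\gamma_1,\ldots,\gamma_m$ and \emph{finite-dimensional} auxiliary matrices $M_k^{(\e)}$ such that $\|(\Ind_\Gamma^G\rho)(f) - \sum_k M_k^{(\e)} \otimes \rho(\gamma_k)\| < \e$ uniformly in $\rho$ over the family $\{\rho_i\} \cup \{\rho_\infty\}$ (uniformity is automatic since the $M_k$ and the error are $\rho$-independent and each $\rho(\gamma_k)$ is a contraction). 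Now $\sum_k M_k^{(\e)} \otimes \rho(\gamma_k)$ is $(\mathrm{id} \otimes \rho)$ applied to a \emph{fixed} element $z_\e = \sum_k M_k^{(\e)} \otimes \gamma_k$ of $M_{d(\e)}(\C) \otimes \C[\Gamma] \subset M_{d(\e)}(C^*_{\red}(\Gamma))$. The standard fact that strong convergence $\rho_i \tos \rho_\infty$ passes to amplifications $\mathrm{id}_{M_d} \otimes \rho_i$ — because the norm on $M_d(\C) \otimes \C[\Gamma]$ is determined by the operator norm on $\C[\Gamma \wr \text{nothing}]$, concretely one embeds $M_d(\C)$ into a matrix algebra and uses that $\|(\mathrm{id}_d \otimes \rho)(z)\|$ is continuous in the relevant $C^*$-norms — then gives $\|(\mathrm{id}\otimes\rho_i)(z_\e)\| \to \|(\mathrm{id}\otimes\rho_\infty)(z_\e)\|$. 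Combining with the uniform $\e$-approximation and letting $\e \to 0$ yields $\|(\Ind_\Gamma^G\rho_i)(f)\| \to \|(\Ind_\Gamma^G\rho_\infty)(f)\|$ for every $f \in C_c(G)$, which is the claim; in the random setting this holds almost surely or in probability because the limit statement for each fixed $z_\e$ does.

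The main obstacle — and the step I would spend the most care on — is making the decomposition $(\Ind_\Gamma^G \rho)(f) = \sum_k M_k \otimes \rho(\gamma_k)$ genuinely rigorous and $\rho$-uniform. The subtleties are: (i) choosing the model of $\Ind_\Gamma^G \rho$ (functions $\xi : G \to \mathcal{H}_\rho$ with $\xi(\gamma g) = \rho(\gamma)\xi(g)$, square-integrable over $\Gamma\backslash G$) and trivializing the Hilbert bundle over a Borel fundamental domain, which introduces a measurable cocycle $c : G \times (\Gamma\backslash G) \to \Gamma$; (ii) checking that convolution by a \emph{fixed} $f \in C_c(G)$ really does involve only finitely many values $\rho(\gamma)$, which is where cocompactness is essential — the kernel at $(x,y)$ is supported where $x^{-1}\gamma y \in \mathrm{supp}(f)$ for the cocycle value $\gamma$, and cocompactness bounds the number of such $\gamma$; and (iii) verifying the approximation of the $M_k$ by finite-rank operators with control that is uniform in $\rho$ — this is really a statement about the fixed scalar kernels, so it should go through, but it requires either a smoothing argument on $f$ or a direct compactness argument on $C$. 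Once these three points are nailed down, the rest is the soft $C^*$-algebra argument above. An alternative, possibly cleaner route avoiding explicit cocycles: prove the statement first for $f$ supported in a small neighborhood of the identity (where $\Ind_\Gamma^G \rho$ looks locally like $L^2(C) \otimes \mathcal{H}_\rho$ with $\rho$ not entering at all), then for general $f$ by writing $f$ as a finite sum of left-translates $g \cdot f_j$ with $f_j$ small-support and $g$ ranging over a finite subset of $G$, and tracking how left translation by $g$ interacts with the $\Gamma$-equivariance — this again produces finitely many $\rho(\gamma)$'s and is perhaps the form in which I would actually write it up.
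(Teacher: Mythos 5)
Your proposal is correct and follows essentially the same route as the paper: realize $(\Ind_\Gamma^G\rho)(f)$ on $L^2(F)\otimes\H_\rho$ as a finite sum $\sum_\gamma a_f(\gamma)\otimes\rho(\gamma^{-1})$ with $\rho$-independent Hilbert--Schmidt (hence compact) kernels $a_f(\gamma)$, the finiteness coming from cocompactness exactly as you describe, and then conclude via finite-rank approximation and the stability of strong convergence under matrix amplification (the paper's Proposition \ref{prop:sc-with-matrix-coefs}). The subtleties you flag are resolved in the paper just as you anticipate: the kernel is $f(h\gamma g^{-1})$, bounded on the compact closure of the fundamental domain, and the approximation is automatically uniform in $\rho$ since each $\rho(\gamma)$ is unitary.
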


In applications usually one wants $\rho_{\infty}=\lambda_{\Gamma}$
so that $\Ind_{\Gamma}^{G}\rho_{i}\tos\Ind_{\Gamma}^{G}\lambda_{\Gamma}=\lambda_{G}$.
In light of Proposition \ref{prop:abstract-strong-convergence-consequence},
and in the case $G$ is semisimple Lie, the identification of the
unitary dual with spectral parameters, it yields a type of spectral
convergence of $\Ind_{\Gamma}^{G}\rho_{i}$ to the Plancherel measure
of $G$.

The downside of this general argument is that

\textbf{a.} it does not give an effective rate of convergence of spectral
parameters,

\textbf{b.} it does not apply as-is to non-uniform lattices.

Both these issues have been dealt with in special instances (see \cite{MageeThomas}
and \cite{HideMagee} regarding Point \textbf{a) }and \cite{HideMagee}
regarding Point \textbf{b})). 

We now address the proof of Theorem \ref{thm:induction}. It relies
on the following type of `matrix amplification' that is well-known
in the literature e.g. \cite[\S 9]{HaagerupThr}. We take the chance
to record an effective version of this lemma provided by Mikael de
la Salle.
\begin{lem}[Effective matrix amplication]
Let $A$ be a $C^{*}$-algebra and $x\in M_{n}(A)$. For every integer
$p$
\[
\|x\|_{M_{n}(A)}\in[1,n^{\frac{1}{2p}}]\max_{i}\|\left(\left(x^{*}x\right)^{p}\right)_{i,i}\|_{A}^{\frac{1}{2p}}.
\]
\end{lem}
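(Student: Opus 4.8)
\emph{Proposal.} The plan is to reduce the two-sided bound to a statement about a single positive element and then prove that statement by a column decomposition. Set $z\eqdf(x^{*}x)^{p}\in M_{n}(A)$. Since $x^{*}x\geq0$, the continuous functional calculus gives $\|z\|_{M_{n}(A)}=\|x^{*}x\|_{M_{n}(A)}^{p}$, and by the $C^{*}$-identity $\|x^{*}x\|_{M_{n}(A)}=\|x\|_{M_{n}(A)}^{2}$, so $\|z\|_{M_{n}(A)}=\|x\|_{M_{n}(A)}^{2p}$. Hence it suffices to prove
\[
\|z\|_{M_{n}(A)}\in[1,n]\,\max_{i}\|z_{i,i}\|_{A},
\]
and then take $2p$-th roots, using that $[1,n]^{1/2p}=[1,n^{1/2p}]$. (One may assume $x\neq0$, the zero case being trivial.)

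For the lower inclusion, fix a faithful representation $\pi:A\into\B(\H)$, so the amplification $\pi_{n}\eqdf\pi\otimes\id_{M_{n}}:M_{n}(A)\into\B(\H^{\oplus n})$ is isometric. Writing $V_{i}:\H\to\H^{\oplus n}$ for the isometric inclusion onto the $i$-th summand, one has $\pi(z_{i,i})=V_{i}^{*}\pi_{n}(z)V_{i}$, hence $\|z_{i,i}\|_{A}\leq\|\pi_{n}(z)\|=\|z\|_{M_{n}(A)}$, and taking the maximum over $i$ gives $\max_{i}\|z_{i,i}\|_{A}\leq\|z\|_{M_{n}(A)}$.

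For the upper inclusion, write $z=w^{*}w$ with $w\eqdf z^{1/2}\in M_{n}(A)$ and set $C_{i}\eqdf\pi_{n}(w)V_{i}:\H\to\H^{\oplus n}$, the ``$i$-th column'' of $w$. Then $\|C_{i}\|^{2}=\|C_{i}^{*}C_{i}\|=\|V_{i}^{*}\pi_{n}(z)V_{i}\|=\|z_{i,i}\|_{A}$. Since $\sum_{i}V_{i}V_{i}^{*}=\id_{\H^{\oplus n}}$ we get $\pi_{n}(w)=\sum_{i}C_{i}V_{i}^{*}$, and because $V_{i}^{*}V_{j}=\delta_{ij}\id_{\H}$ this gives $\pi_{n}(w)\pi_{n}(w)^{*}=\sum_{i}C_{i}C_{i}^{*}$. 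Therefore
\[
\|z\|_{M_{n}(A)}=\|\pi_{n}(w)\|^{2}=\Bigl\|\textstyle\sum_{i}C_{i}C_{i}^{*}\Bigr\|\leq\sum_{i}\|C_{i}\|^{2}\leq n\max_{i}\|z_{i,i}\|_{A}.
\]
Combining the two inclusions and taking $2p$-th roots yields the claim.

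The argument is essentially routine; the one place that needs a little care is the upper bound, where estimating $\|\pi_{n}(w)\|\leq\sum_{i}\|C_{i}V_{i}^{*}\|=\sum_{i}\|C_{i}\|$ directly would cost a factor $n^{2}$ rather than $n$. Passing instead through $\|\pi_{n}(w)\|^{2}=\|\sum_{i}C_{i}C_{i}^{*}\|$ and applying the triangle inequality to the \emph{positive} summands $C_{i}C_{i}^{*}$ is exactly what produces the stated constant $n^{1/2p}$.
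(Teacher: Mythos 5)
Your proof is correct, and it is worth noting that the paper states this lemma without proof (it is recorded as an observation of de la Salle, in the spirit of the standard matrix amplification trick of Haagerup--Thorbj\o rnsen), so your argument fills in a genuinely omitted verification rather than duplicating one. The reduction to $z=(x^{*}x)^{p}$ via $\|z\|=\|x\|^{2p}$ is right, the lower bound by compression $z_{i,i}=V_{i}^{*}zV_{i}$ is right, and the upper bound via the column decomposition of $w=z^{1/2}$, using $\pi_{n}(w)\pi_{n}(w)^{*}=\sum_{i}C_{i}C_{i}^{*}$ and the triangle inequality on the positive summands, correctly yields $\|z\|\leq\sum_{i}\|z_{i,i}\|\leq n\max_{i}\|z_{i,i}\|$; your closing remark about why one must square before applying the triangle inequality (to get $n$ rather than $n^{2}$, i.e. $n^{1/2p}$ rather than $n^{1/p}$) is exactly the point. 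An equivalent packaging, which you may find slightly slicker, is to invoke the conditional expectation $E$ onto the diagonal $\bigoplus_{i}A\subset M_{n}(A)$ and the standard operator inequality $y\leq nE(y)$ for $y\geq0$, applied to $y=z$; this gives $\|z\|\leq n\max_{i}\|z_{i,i}\|$ in one line, but your column argument proves the same inequality from scratch and needs nothing beyond a faithful representation. The only cosmetic caveat is the degenerate case $\max_{i}\|z_{i,i}\|=0$, which your upper bound already forces to give $x=0$, so your parenthetical dismissal of it is fine.
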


As a result,
\begin{prop}
\label{prop:sc-with-matrix-coefs}If $\Gamma$ is discrete and $\{\rho_{i}\}_{i=1}^{\infty}$
are a sequence of unitary representations of $\Gamma$ with $\rho_{i}\tos\lambda_{\Gamma}$
then for all $r\in\N$ and all $z\in\Mat_{r\times r}(\C)\otimes_{\C}\C[\Gamma]$,
\[
\|[\id\otimes\rho_{i}](z)\|\to\|\id\otimes\lambda_{\Gamma}(z)\|.
\]
\end{prop}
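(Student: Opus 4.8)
The plan is to reduce everything to the effective matrix amplification lemma stated above, exploiting the fact that the diagonal entries of $(z^{*}z)^{p}$ are fixed elements of $\C[\Gamma]$, independent of the representation into which we feed them. First I would write $z=\sum_{g}M_{g}\otimes g$ with $M_{g}\in\Mat_{r\times r}(\C)$, finitely many nonzero, and observe that for each integer $p\geq1$ the element $(z^{*}z)^{p}$ again lies in $\Mat_{r\times r}(\C)\otimes_{\C}\C[\Gamma]$; call its diagonal entries $w_{1},\dots,w_{r}\in\C[\Gamma]$. Since $\id\otimes\rho_{i}$ and $\id\otimes\lambda_{\Gamma}$ are $*$-homomorphisms on $\Mat_{r\times r}(\C)\otimes_{\C}\C[\Gamma]$, one has
\[
\Bigl(\bigl([\id\otimes\rho_{i}](z)\bigr)^{*}[\id\otimes\rho_{i}](z)\Bigr)^{p}=[\id\otimes\rho_{i}]\bigl((z^{*}z)^{p}\bigr),
\]
and likewise for $\lambda_{\Gamma}$, so the $(j,j)$ entry of the left-hand side is exactly $\rho_{i}(w_{j})$, respectively $\lambda_{\Gamma}(w_{j})$.

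Next I would apply the amplification lemma twice: to $[\id\otimes\rho_{i}](z)$, viewed inside $M_{r}(A_{i})$ where $A_{i}$ is the $C^{*}$-algebra generated by $\rho_{i}(\C[\Gamma])$, and to $[\id\otimes\lambda_{\Gamma}](z)\in M_{r}(C_{\red}^{*}(\Gamma))$. This gives, for every fixed $p$,
\[
\max_{j}\|\rho_{i}(w_{j})\|^{\frac{1}{2p}}\leq\|[\id\otimes\rho_{i}](z)\|\leq r^{\frac{1}{2p}}\max_{j}\|\rho_{i}(w_{j})\|^{\frac{1}{2p}},
\]
and the same chain of inequalities with $\rho_{i}$ replaced by $\lambda_{\Gamma}$. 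By hypothesis $\rho_{i}\tos\lambda_{\Gamma}$, so $\|\rho_{i}(w_{j})\|\to\|\lambda_{\Gamma}(w_{j})\|$ for each of the finitely many $j$, hence $\max_{j}\|\rho_{i}(w_{j})\|\to\max_{j}\|\lambda_{\Gamma}(w_{j})\|$.

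Combining these and sending $i\to\infty$ with $p$ fixed, I would obtain
\[
r^{-\frac{1}{2p}}\,\|[\id\otimes\lambda_{\Gamma}](z)\|\leq\liminf_{i\to\infty}\|[\id\otimes\rho_{i}](z)\|\leq\limsup_{i\to\infty}\|[\id\otimes\rho_{i}](z)\|\leq r^{\frac{1}{2p}}\,\|[\id\otimes\lambda_{\Gamma}](z)\|,
\]
and then let $p\to\infty$, so that $r^{\pm 1/2p}\to1$ and the limit exists and equals $\|[\id\otimes\lambda_{\Gamma}](z)\|$. In the random case one runs the same argument with the convergences $\|\rho_{i}(w_{j})\|\to\|\lambda_{\Gamma}(w_{j})\|$ holding almost surely, resp. in probability, and intersects over the finitely many $j$. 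I do not expect a genuine obstacle here; the crux is simply the observation that the $w_{j}$ are fixed group-algebra elements, so the hypothesis applies directly to them, and the only points needing care are that $\id\otimes\rho_{i}$ really is a $*$-homomorphism (so adjoints and $p$-th powers pass through it), that the $C^{*}$-norm of a matrix over a $C^{*}$-subalgebra coincides with its norm in the ambient algebra, and that the two limits are taken in the right order ($i$ first, then $p$).
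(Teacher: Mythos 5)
Your proposal is correct and follows exactly the route the paper intends: the paper states the effective matrix amplification lemma and deduces the proposition from it ("As a result") without spelling out details, and your argument supplies precisely those details, applying the lemma to $[\id\otimes\rho_{i}](z)$ and $[\id\otimes\lambda_{\Gamma}](z)$, using that the diagonal entries of $(z^{*}z)^{p}$ are fixed elements of $\C[\Gamma]$, and taking $i\to\infty$ before $p\to\infty$. The inequality chains and the order of limits are handled correctly, so there is no gap.
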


\begin{proof}[Proof of Theorem \ref{thm:induction}]
 Given $f\in C_{c}(G)$, $\pi(f)$ acts on $L^{2}(G,\rho)$ by 
\[
\pi(f)[\phi](h)=\int_{G}f(g)\phi(g^{-1}h)d\mu(g)=\int_{G}f(hg)\phi(g^{-1})d\mu(g).
\]
where $\mu$ is the left invariant Haar measure. This is the same
as 
\begin{align*}
\sum_{\gamma\in\Gamma}\int_{F}f(h\gamma g)\phi(g^{-1}\gamma^{-1})d\mu(g) & =\sum_{\gamma\in\Gamma}\int_{F}f(h\gamma g)\rho(\gamma^{-1})\phi(g^{-1})d\mu(g)\\
 & =\sum_{\gamma}\rho(\gamma^{-1})[A_{f}(\gamma)\phi\lvert_{F}]
\end{align*}
where $A_{f}(\gamma):L^{2}(F,V)\to L^{2}(F,V)$ such that if $\psi\in L^{2}(F,V)$
and $h\in F$, 
\begin{equation}
A_{f}(\gamma)[\psi](h)\eqdf\int_{F}f(h\gamma g)\psi(g^{-1})dg.\label{eq:Af}
\end{equation}
Using $L^{2}(F,V)\cong L^{2}(F)\otimes V$ we obtain a unitary conjugacy
\[
\pi(f)\cong\sum_{\gamma\in\Gamma}a_{f}(\gamma)\otimes\rho(\gamma^{-1})
\]
where $a_{f}(\gamma)$ is defined by the same formula (\ref{eq:Af})
as $A_{f}$ but acting on $L^{2}(F)$.

Now we make two observations. Firstly, $a_{f}(\gamma)=0$ unless there
are $h$ and $g$ in $F$ such that $h\gamma g\in\mathrm{supp}(f)$.
Let $K\subset G$ be compact such that 
\[
G=\bigcup_{\gamma\in\Gamma}\gamma C
\]
and $F\subset C$. The above event is majorized by $\gamma\subset C^{-1}\mathrm{supp}(f)C^{-1}$
which is contained in a compact set. This compact subset can meet
only finitely many elements of $\gamma$. So
\[
\gamma\mapsto a_{f}(\gamma)
\]
 has finite support.

Secondly, $a_{f}(\gamma)$ is an integral operator with kernel $K_{\gamma}(h,g)=f(h\gamma g^{-1})$.
Since $f$ is continuous, and the closure of $F$ is compact, it is
bounded on $F$ and even Hilbert-Schmidt.

So the image of this conjugacy is contained in 
\[
\C[\Gamma]\otimes_{\C}HS(F)
\]
where $HS(F)$ are the Hilbert-Schmidt operators on $L^{2}(F)$ and
hence generate a $C^{*}$-subalgebra of $C_{\red}^{*}(\Gamma)\otimes\KK(L^{2}(F))$
where $\KK$ are the compact operators on a separable Hilbert space.

Now by Proposition \ref{prop:sc-with-matrix-coefs} together with
approximation of compact operators by finite rank ones, we have for
all $f\in C_{c}(G)$ and notation as above, if $\pi_{i}=\Ind_{\Gamma}^{G}\rho_{i}$
and $\rho_{i}\tos\rho_{\infty}$ then

\[
\|\pi_{i}(f)\|=\|\sum_{\gamma\in\Gamma}a_{f}(\gamma)\otimes\rho_{i}(\gamma^{-1})\|\to\|\sum_{\gamma\in\Gamma}a_{f}(\gamma)\otimes\lambda_{\Gamma}(\gamma^{-1})\|\cong\|\Ind_{\Gamma}^{G}\rho_{\infty}\|.
\]
This proves Theorem \ref{thm:induction}.
\end{proof}

\subsection{Hyperbolic surfaces}

In this section $\Gamma$ is the fundamental group of a closed orientable
surface of genus $g\geq2$. Moreover, we discretely embed $\Gamma\hookrightarrow\PSL_{2}(\R)$
in some fixed but arbitrary way, fixing a hyperbolic structure on
a genus $g$ surface $X\eqdf\Gamma\backslash\mathbb{H}$. Importantly,
this embedding could be arithmetic.

By Corollary \ref{cor:surfaces}, there is a sequence $\{\phi_{i}\in\Hom(\Gamma,S_{n_{i}})\}_{i=1}^{\infty}$
such that the induced $\rho_{i}=\std\circ\phi_{i}$ satisfy $\rho_{i}\tos\lambda_{\Gamma}$.
Hence 
\[
\Ind_{\Gamma}^{G}\rho_{i}\tos\lambda_{\PSL_{2}(\R)}
\]
by Theorem \ref{thm:induction}.

Proceeding depends on knowing the unitary dual of $\PSL_{2}(\R)$
and the Plancherel measure. Of interest here are the complementary
series which are outside the support of the Plancherel measure. Now
Proposition \ref{prop:abstract-strong-convergence-consequence} implies
that for any compact subset $K$ of the complementary series, for
$i\gg_{K}1$ no member of $K$ is weakly contained in 
\[
\Ind_{\Gamma}^{G}\rho_{i}.
\]

\emph{\uline{A note.}}\emph{ (On fibered products)}

We now make one more observation. Because $\rho_{i}$ is derived from
$\phi_{i}$, the space of $\Ind_{\Gamma}^{G}\rho_{i}$ is same as
$L^{2}$ sections of the fibered product
\[
\Gamma\backslash_{\phi_{i}}(G\times\ell_{0}^{2}([n_{i}])).
\]
In turn, such sections are the same as $L^{2}$ functions on 
\[
\Gamma\backslash_{\phi_{i}}(G\times[n_{i}])
\]
that have mean zero in every fiber (the above is a covering space
of $\Gamma\backslash G$).

By the relation between the Casimir operator of $\PSL_{2}(\R)$ and
the Laplacian on the hyperbolic surface
\[
X_{\phi_{i}}\eqdf\Gamma\backslash_{\phi_{i}}\left(\mathbb{H}\times[n_{i}]\right)=\Gamma\backslash_{\phi_{i}}(G\times[n_{i}])/P\SO(2),
\]
one obtains as conclusion:
\begin{thm}
As $i\to\infty$, 
\[
\spec(\Delta_{X_{\phi_{i}}})\cap\left[0,\frac{1}{4}-o(1)\right]=\spec(\Delta_{X})\cap\left[0,\frac{1}{4}-o(1)\right].
\]
\end{thm}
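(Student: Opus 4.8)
The plan is to convert the statement into a question about the $\PSL_2(\R)$-representations $\Ind_{\Gamma}^{G}\rho_{i}$, for which we already know $\Ind_{\Gamma}^{G}\rho_{i}\tos\lambda_{\PSL_2(\R)}$ (by Theorem~\ref{thm:induction} applied to the sequence of Corollary~\ref{cor:surfaces}), and then to feed this into Proposition~\ref{prop:abstract-strong-convergence-consequence}. First I would record the splitting already implicit in the note above: writing $\C^{n_{i}}$ for the $\Gamma$-module on which $\Gamma$ acts via $\phi_{i}$ and the permutation action of $S_{n_{i}}$, one has $\C^{n_{i}}\cong\triv\oplus(\std\circ\phi_{i})$ as a $\Gamma$-module, hence $\Ind_{\Gamma}^{G}(\C^{n_{i}})\cong L^{2}(\Gamma\backslash G)\oplus\Ind_{\Gamma}^{G}\rho_{i}$, and after passing to $P\SO(2)$-fixed vectors (which is what produces functions on the surface)
\[
L^{2}(X_{\phi_{i}})\ \cong\ L^{2}(X)\ \oplus\ (\Ind_{\Gamma}^{G}\rho_{i})^{P\SO(2)},
\]
with $\Delta_{X_{\phi_{i}}}$ acting as the Casimir operator $\Omega$ (suitably normalized) on each summand. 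The first summand is the pullback of $L^{2}(X)$ along the finite-volume covering $X_{\phi_{i}}\to X$; it is isometric to $L^{2}(X)$ with the same Laplacian, and moreover any eigenfunction on $X$ pulls back to an $L^{2}$ eigenfunction on $X_{\phi_{i}}$, so $\spec(\Delta_{X})\subseteq\spec(\Delta_{X_{\phi_{i}}})$ for \emph{every} $i$. It therefore remains to show that for each $\e>0$ there is $i_{0}(\e)$ so that for $i>i_{0}(\e)$ the ``new'' summand $(\Ind_{\Gamma}^{G}\rho_{i})^{P\SO(2)}$ has no Casimir eigenvalue in $(0,\tfrac14-\e]$; a possible new eigenvalue $0$ is harmless since $0\in\spec(\Delta_{X})$ anyway.

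For the new summand I would use the spherical representation theory of $G=\PSL_2(\R)$. A $P\SO(2)$-fixed vector $v$ with $\Omega v=\lambda v$ and $\lambda\in(0,\tfrac14)$ has, after normalizing, diagonal matrix coefficient $g\mapsto\langle\Ind_{\Gamma}^{G}\rho_{i}(g)v,v\rangle/\|v\|^{2}$ equal to the spherical function $\varphi_{s}$ of the complementary series representation $\pi_{s}$, where $s\in(\tfrac12,1)$ is fixed by $s(1-s)=\lambda$; this is just the uniqueness of bi-$P\SO(2)$-invariant $\Omega$-eigenfunctions (the Gelfand pair property of $(G,P\SO(2))$). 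Since $\pi_{s}$ is irreducible, $\varphi_{s}$ is an extreme point of the relevant positive-definite functions, and hence the cyclic subrepresentation of $\Ind_{\Gamma}^{G}\rho_{i}$ generated by $v$ is a copy of $\pi_{s}$. So a new eigenvalue $\lambda\in(0,\tfrac14-\e]$ would force $\pi_{s}$, with $s=s(\lambda)\in[\tfrac12+\sqrt\e,\,1)$, to be contained in $\Ind_{\Gamma}^{G}\rho_{i}$, in particular weakly contained. Now the set $\{\pi_{s}:s\in[\tfrac12+\sqrt\e,1)\}$ has closure $\{\pi_{s}:s\in[\tfrac12+\sqrt\e,1)\}\cup\{\triv_{G}\}$ in $\widehat G$, since the complementary series converges to the trivial representation as $s\to1$; this closure is compact, and it is disjoint from $\mathrm{support}(\lambda_{\PSL_2(\R)})$ because complementary series are non-tempered and, $\PSL_2(\R)$ being non-amenable, the trivial representation is non-tempered as well. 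Proposition~\ref{prop:abstract-strong-convergence-consequence}, applied with $\pi_{\infty}=\lambda_{\PSL_2(\R)}$ and this compact set $K$, then provides $i_{0}(\e)$ such that for $i>i_{0}(\e)$ no member of $K$ --- in particular no $\pi_{s}$ with $s\in[\tfrac12+\sqrt\e,1)$ --- is weakly contained in $\Ind_{\Gamma}^{G}\rho_{i}$. This is the required statement for each fixed $\e$; taking $\e=\e_{i}\downarrow0$ slowly enough that $i\ge i_{0}(\e_{i})$ gives the form $[0,\tfrac14-o(1)]$.

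The main obstacle --- really the only step that is not bookkeeping --- is controlling the small eigenvalues as they tend to $0$, equivalently the complementary series $\pi_{s}$ as $s\to1$. One cannot apply Proposition~\ref{prop:abstract-strong-convergence-consequence} directly to the half-open family $\{\pi_{s}:s\in[\tfrac12+\sqrt\e,1)\}$, which is not closed in $\widehat G$; one is forced to adjoin its limit point, namely the trivial representation of $\PSL_2(\R)$, and then to check that the enlarged compact set still avoids the support of the regular representation. That last check is precisely the non-amenability of $\PSL_2(\R)$ --- equivalently the failure of property (T), which is also what makes $\pi_{s}\to\triv_{G}$ to begin with. The other ingredients --- the Casimir/Laplacian dictionary, the $\triv\oplus\std$ splitting of $L^{2}(X_{\phi_{i}})$, and the fact that the only spherical unitary representations of $\PSL_2(\R)$ with Casimir eigenvalue in $[0,\tfrac14)$ are the trivial representation and the complementary series --- are standard.
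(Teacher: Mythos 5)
Your proposal is correct and follows essentially the same route as the paper: induce via Theorem \ref{thm:induction}, identify $L^{2}(X_{\phi_{i}})$ with $L^{2}(X)\oplus(\Ind_{\Gamma}^{G}\rho_{i})^{P\SO(2)}$ through the fibered-product picture, and rule out new small eigenvalues by applying Proposition \ref{prop:abstract-strong-convergence-consequence} to a compact set of complementary series via the Casimir/Laplacian dictionary. Your treatment of eigenvalues near $0$ --- adjoining the trivial representation to get a genuinely compact $K$ avoiding the support of $\lambda_{\PSL_{2}(\R)}$ --- is a careful filling-in of a point the paper's sketch leaves implicit, not a different method.
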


By choosing $X$ so that $\Delta_{X}$ has no eigenvalues below $\frac{1}{4}$,
one obtains a sequence of closed hyperbolic surfaces (covering $X$)
with genus tending to $\infty$ and first non-zero eigenvalue tending
to $\frac{1}{4}$. \footnote{In fact, one can arrange so that the original $X$ is arithmetic and
so obtain the above conclusion where all surfaces are arithmetic.
Elaboration on this (also using strong convergence as an essential
ingredient) one can prove that \uline{every} $x\in[0,\frac{1}{4}]$
is a limit point of $\lambda_{1}$ of arithmetic hyperbolic surfaces
\cite{Magee2024}. }
\begin{rem}
This result, which established a conjecture of Buser \cite{Buser}
was first obtained in joint work of the author with Hide \cite{HideMagee}
by a related method. At this time, we only had $\PPF$ for free groups,
so we worked with non-compact surfaces with free fundamental groups
and compactified at the end of the argument following Buser--Burger--Dodziuk
\cite{BBD}. The problem this introduced was that we did not have
access to Theorem \ref{thm:induction} so we had to make a more involved
argument using the resolvent of the Laplacian and cusp-patching techniques.
This technique also yields the following theorem, taking Bordenave--Collins
as input.
\end{rem}

\begin{thm}[Hide--Magee]
Let $X$ be a finite-area non-compact hyperbolic surfaces so that
$\pi_{1}(X)\cong\free$ for some free group $\free$. Let $\phi_{n}$
now be a uniform random element of $\Hom(\free,S_{n})$. Then $X_{\phi_{n}}$
is a uniform random degree$-n$ covering space of $X$. With probability
tending to one as $n\to\infty$

\[
\spec(\Delta_{X_{\phi_{n}}})\cap\left[0,\frac{1}{4}-o(1)\right]=\spec(\Delta_{X})\cap\left[0,\frac{1}{4}-o(1)\right].
\]
\end{thm}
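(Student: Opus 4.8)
\emph{Overview and reduction to the new Laplacian.} The plan is to split $L^{2}(X_{\phi_{n}})$ into the pullback of $L^{2}(X)$ and a ``new'' part carrying the representation $\std\circ\phi_{n}$, and to rule out small eigenvalues in the new part by transferring the spectral gap $\lambda_{0}=\tfrac14$ of the Laplacian on $\mathbb{H}$ through the Bordenave--Collins strong convergence $\std\circ\phi_{n}\tos\lambda_{\free}$ of \cite{BordenaveCollins}. Concretely: write $X=\free\backslash\mathbb{H}$ and let $E_{n}$ be the flat unitary bundle on $X$ with monodromy $\rho_{n}=\std\circ\phi_{n}$. Since the defining representation of $S_{n}$ is the trivial representation plus $\std$, one has $L^{2}(X_{\phi_{n}})=\pi^{*}L^{2}(X)\oplus L^{2}(X;E_{n})$ as $\Delta$-modules; hence $\spec(\Delta_{X})\subseteq\spec(\Delta_{X_{\phi_{n}}})$ unconditionally, and it suffices to show that for every fixed $\varepsilon>0$, with probability tending to one the twisted Laplacian $\Delta_{E_{n}}$ on $L^{2}(X;E_{n})$ has no $L^{2}$-eigenvalue in $[0,\tfrac14-\varepsilon]$ (its continuous spectrum is $[\tfrac14,\infty)$, as the cusps of $X$ lift to cusps of $X_{\phi_{n}}$); a diagonal argument over $\varepsilon=\varepsilon_{n}\downarrow0$ then yields the theorem with $o(1)=\varepsilon_{n}$. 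Writing $\delta=\sqrt\varepsilon$, this is the statement that $\Delta_{E_{n}}-s(1-s)$ is boundedly invertible for every real $s\in[\tfrac12+\delta,1-\delta]$, with probability tending to one.

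\emph{Parametrix via cusp-patching.} Decompose $X=X_{0}\cup C_{1}\cup\cdots\cup C_{k}$ into a compact core with geodesic boundary together with standard cusp neighbourhoods. Over $C_{j}$ the bundle $E_{n}$ splits, along the eigenspaces of the peripheral monodromy $\rho_{n}(c_{j})$, into line bundles, each of which is a (possibly twisted) hyperbolic cusp; every such cusp has spectrum $[\tfrac14,\infty)$ and \emph{no} spectrum below $\tfrac14$ --- so the $\gtrsim\log n$ ``untwisted'' directions per cusp do no harm --- and its model resolvent $R^{C_{j}}_{n}(s)$ exists with $\|R^{C_{j}}_{n}(s)\|\leq C(\delta)$ independently of $n$. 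On the core, let $R^{X_{0}}_{n}(s)$ be the resolvent of the $\rho_{n}$-twisted Dirichlet Laplacian on $L^{2}(X_{0};E_{n})$. With smooth cutoffs supported in collars of fixed large width $w$, the parametrix $R^{\mathrm{app}}_{n}(s)=\chi_{0}R^{X_{0}}_{n}(s)\psi_{0}+\sum_{j}\chi_{j}R^{C_{j}}_{n}(s)\psi_{j}$ satisfies $(\Delta_{E_{n}}-s(1-s))R^{\mathrm{app}}_{n}(s)=I+K_{n}(s)$, where $K_{n}(s)$ is a sum of first-order collar commutator terms with $\|K_{n}(s)\|\lesssim_{\delta}w^{-1}\bigl(1+\sup_{n}\|R^{X_{0}}_{n}(s)\|\bigr)$. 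Thus if $\sup_{n}\|R^{X_{0}}_{n}(s)\|<\infty$ with probability tending to one, then for $w$ large one gets $\|K_{n}(s)\|<1$ and $R_{n}(s)=R^{\mathrm{app}}_{n}(s)(I+K_{n}(s))^{-1}$ is the required bounded inverse.

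\emph{The core estimate via strong convergence.} It remains to bound $\|R^{X_{0}}_{n}(s)\|$ uniformly in $n$ with probability tending to one. Let $\tilde X_{0}=\mathbb{H}\setminus\bigcup(\text{horoballs})$ be the universal cover of the core, with Dirichlet Green kernel $\tilde g_{s}$; then $R^{X_{0}}_{n}(s)$ is the $\rho_{n}$-twisted $\free$-periodization $\sum_{\gamma\in\free}[\,\tilde g_{s}(\cdot,\gamma\,\cdot)|_{X_{0}\times X_{0}}\,]\otimes\rho_{n}(\gamma)$, and the same expression with $\lambda_{\free}$ in place of $\rho_{n}$ is unitarily equivalent to the Dirichlet resolvent of $\Delta_{\tilde X_{0}}$, which by domain monotonicity satisfies $\lambda_{0}^{\mathrm{Dir}}(\tilde X_{0})\geq\lambda_{0}(\mathbb{H})=\tfrac14$, hence has norm $\leq\delta^{-2}$ at $s(1-s)\leq\tfrac14-\delta^{2}$. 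To transfer this bound from $\lambda_{\free}$ to $\rho_{n}$, use
\[
(\Delta_{\tilde X_{0}}-s(1-s))^{-1}=\tfrac1{s-\frac12}\int_{0}^{\infty}e^{-(s-\frac12)t}\cos\bigl(t\sqrt{\Delta_{\tilde X_{0}}-\tfrac14}\bigr)\,dt .
\]
Truncating the integral at a large $T$ gives an operator of propagation $\leq T$ (finite propagation speed of the wave equation), so its $\free$-periodization is a \emph{finite} sum $\sum_{\gamma}a_{T}(\gamma)\otimes(\cdot)(\gamma)$ with each $a_{T}(\gamma)$ a fixed Hilbert--Schmidt (hence finite-rank-approximable) operator on $L^{2}(X_{0})$; the extension of strong convergence to compact coefficients used in the proof of Theorem \ref{thm:induction} then gives $\|\sum_{\gamma}a_{T}(\gamma)\otimes\rho_{n}(\gamma)\|\to\|\sum_{\gamma}a_{T}(\gamma)\otimes\lambda_{\free}(\gamma)\|\leq\delta^{-2}$ in probability (for fixed $T$), while the tail $\int_{T}^{\infty}$ contributes at most $\delta^{-2}e^{-\delta T}$ since $\|\cos(t\sqrt{\Delta_{\tilde X_{0}}-\tfrac14})\|\leq1$ and $\free$-periodization is norm-contractive. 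Choosing $T$ large and fixed (which costs an $O(e^{-\delta T})$ worsening of the excluded interval, absorbed by $\varepsilon$), or $T=T_{n}\to\infty$ slowly with a quantitative form of \cite{BordenaveCollins}, yields $\sup_{n}\|R^{X_{0}}_{n}(s)\|\leq\delta^{-2}+o(1)$ with probability tending to one, completing the parametrix construction and hence the proof.

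\emph{Main obstacle.} The crux is the non-compactness of $X$: the periodized resolvent does not converge absolutely in the strip $\tfrac12<\Re s<1$ (the critical exponent of $\free$ equals $1$), so one cannot simply periodize the resolvent kernel as in the cocompact case of Theorem \ref{thm:induction} --- this is exactly why one routes the transfer through the wave operator, finite propagation speed being what turns a truncated resolvent into an honest $\C[\free]$-coefficient operator to which Bordenave--Collins applies. Dovetailing this with the cusp-patching --- keeping the collar error $K_{n}(s)$ below $1$ while the model cuspidal resolvents themselves blow up like $(s-\tfrac12)^{-2}$ as $s\to\tfrac12$, and checking that the $\sim\log n$ untwisted cuspidal directions produce no small eigenvalues --- is the main technical work, and is the reason the original \cite{HideMagee} argument (with only $\PPF$ for free groups available) was run through the resolvent rather than through an induction theorem.
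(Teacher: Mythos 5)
Your overall architecture --- old/new decomposition, Dirichlet cusp models with spectrum in $[\tfrac14,\infty)$ uniformly over the twists, a parametrix glued over wide collars, and a transfer of the core resolvent bound from $\lambda_{\free}$ to $\rho_{n}=\std\circ\phi_{n}$ using Bordenave--Collins with operator coefficients --- is the same family of argument as the resolvent/cusp-patching proof of \cite{HideMagee} that this survey points to. But the step where you cross the central obstruction (which you yourself identify: the critical exponent is $1$, so the periodized resolvent does not converge for $\tfrac12<\Re s<1$) has a genuine gap. You truncate the wave integral at time $T$, transfer the truncated, finitely supported part by strong convergence, and then dispose of the tail by asserting that ``$\free$-periodization is norm-contractive'', i.e.\ that $\|\sum_{\gamma}b(\gamma)\otimes\rho_{n}(\gamma)\|\le\|\sum_{\gamma}b(\gamma)\otimes\lambda_{\free}(\gamma)\|$. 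That inequality is not a general fact about periodization: it is equivalent to weak containment of $\rho_{n}$ in $\lambda_{\free}$, which fails for finite-dimensional unitary representations of a nonamenable group (for the trivial representation and $b$ scalar on a symmetric generating set the two sides are $2r$ versus $2\sqrt{2r-1}$). For your particular $\rho_{n}$, an approximate version for \emph{finitely supported, compact} coefficients is exactly what strong convergence supplies, but the tail is supported on infinitely many $\gamma$ with non-summable coefficient norms, so invoking it there is circular. Equivalently, on the twisted side the tail is $\tfrac1\sigma\int_{T}^{\infty}e^{-\sigma t}\cos(t\sqrt{A_{n}})\,dt$ with $A_{n}$ the twisted core Dirichlet Laplacian minus $\tfrac14$, and bounding $\|\cos(t\sqrt{A_{n}})\|$ by $1$ presupposes $A_{n}\ge0$ --- precisely the spectral gap you are trying to prove; the only a priori bound is $A_{n}\ge-\tfrac14$, giving growth $\cosh(t/2)$ that $e^{-\sigma t}$ with $\sigma<\tfrac12$ cannot beat. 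So the core estimate, the one place randomness enters, is not established.

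A secondary defect in the same step: the coefficients $a_{T}(\gamma)$ of the sharply truncated wave integral are not Hilbert--Schmidt or even compact (wave propagators are not smoothing; already the $\gamma=e$ coefficient contains $\chi\cos(t\sqrt{A})\chi$), so Proposition \ref{prop:sc-with-matrix-coefs} plus compact approximation does not apply as invoked. Both problems are repairable along lines used in this area: replace $e^{-\sigma t}\mathbf{1}_{[0,T]}$ by a smooth compactly supported $\varphi_{T}=\psi_{T}\ast\psi_{T}$, so that $h_{T}(\lambda)=\int\varphi_{T}(t)\cos(t\sqrt{\lambda})\,dt$ is nonnegative and bounded (polynomially in $T$) on $[0,\infty)$ but exponentially large in $T$ at any $\lambda\le-\delta^{2}$, while $h_{T}(A_{n})$ is smoothing, has Hilbert--Schmidt coefficients and finite $\gamma$-support by finite propagation and locality; then strong convergence with compact coefficients plus the upstairs bound $A\ge0$ contradicts a putative negative spectral point of $A_{n}$, with no tail left to estimate. (You should also let $s$ run up to $1$, or treat new eigenvalues in $[0,\delta(1-\delta))$ separately; your restriction to $s\le1-\delta$ leaves them unexcluded.) As written, though, the transfer step does not go through, and this is exactly the ``more involved'' analytic point the original argument of \cite{HideMagee} is designed to handle.
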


This theorem forms a part of much recent activity on the spectral
gaps of random hyperbolic surfaces \cite{MN1,Magee2023,MageeNaudPuder,Wu2022,Lipnowski2024,Anantharaman2023,Hide2023,Hide2024a,Anantharaman2024}.

It is an interesting question to what extent the `induction principle'
obtained above in Theorem \ref{thm:induction} can be extended to
general non-cocompact and even infinite covolume lattices in e.g.
reductive groups. In recent work \cite{Calderon2024}, it has been
shown that induction of strong convergence works well in the setting
of conformally compact hyperbolic surfaces (of infinite area), and
even gives resonance free regions --- a phenomenon that cannot be
seen solely in the representation theory of $\Ind_{\Gamma}^{\PSL_{2}(\R)}\rho_{i}$.
These questions should be pursued in future work.

\bibliographystyle{amsalpha}
\bibliography{strong_convergence}

\noindent Michael Magee, \\
Department of Mathematical Sciences,\\
Durham University, \\
Lower Mountjoy, DH1 3LE Durham,\\
United Kingdom\\
\texttt{michael.r.magee@durham.ac.uk}
\end{document}